\documentclass[12pt,reqno]{amsart}
\usepackage{amsfonts,color,amsmath,amssymb,fancyhdr}
\usepackage{amsthm}
\usepackage{txfonts}
\usepackage{float}
\usepackage{multirow}
\setlength{\evensidemargin}{0.3cm} \setlength{\oddsidemargin}{0.3cm}
\setlength{\topmargin}{0.1cm}
\setlength{\headheight}{0.3cm}
\setlength{\textheight}{9.0in} \setlength{\textwidth}{6.2in}

\def\Box{\vcenter{\vbox{\hrule\hbox{\vrule
     \vbox to 8.8pt{\hbox to 10pt{}\vfill}\vrule}\hrule}}}

\newtheorem{thm}{Theorem}[section]
\newtheorem{lemma}[thm]{Lemma}
\newtheorem{corollary}[thm]{Corollary}
\newtheorem{prop}[thm]{Proposition}
\newtheorem{definition}[thm]{Definition}
\newtheorem{example}[thm]{Example}
\numberwithin{equation}{section}
\newtheorem{remark}[thm]{Remark}

\newcommand{\bC}{\mathbb C}
\newcommand{\bF}{\mathbb F}

\newcommand{\bZ}{\mathbb Z}

\definecolor{Purple}{rgb}{0.5,0,0.5}

\newcommand{\tup}{\textup}

\begin{document}

\title{On central difference sets in Suzuki $p$-groups of type $A$}
\begin{center}
\author{Wendi Di,  Zhiwen He$^*$ }
\end{center}
\address{School of Mathematical Sciences, Zhejiang University, Hangzhou 310027,  China}
\email{Wendyjj@zju.edu.cn}
\address{School of Mathematical Sciences, Zhejiang University, Hangzhou 310027,  China}
\email{zhiwenhe94@163.com}

\begin{abstract}
In this paper, when  the order of $\theta$ is even, we prove that there exists no central difference sets in $A_2(m,\theta)$ and establish some non-existence results of central partial difference sets in $A_p(m,\theta)$ with $p>2$. When the order of $\theta$ is odd, we construct central  difference sets in $A_2(m,\theta)$. Furthermore, we give some reduced linking systems of difference sets in $A_2(m,\theta)$ by using the difference sets we constructed. In the case $p>2$, we construct Latin square type central partial difference sets in $A_p(m,\theta)$ by a similar method.

\end{abstract}
\keywords{ central difference sets; central partial difference sets;  characters; linking systems.\\
{\bf  Mathematics Subject Classification (2010) 20C15, 20D15, 20E45, 05B10}\\
{\bf  Funding information: National Natural Science Foundation of China under Grant No. 11771392.}\\
$^*$Correspondence author}
\maketitle

\section{Introduction}\label{Introduction}
\subsection{Difference sets}\quad

  Let $G$ be a  group of order $v$, and let $D$ be a subset of $G$ with cardinality $k$. Then we call $D$ a \emph{$(v,k,\lambda,n)$-difference set}  in $G$ with $n=k-\lambda$ provided that the expressions $d_1d_2^{-1}$, for $d_1,d_2\in D$ with $d_1\neq d_2$ represent each non-identity element in $G$ exactly  $\lambda$ times. Particularly, we say the set $D$  a \emph{central difference set} if it is a union of conjugacy classes in $G$.

In 1938, Singer \cite{Singer} first introduced difference sets  in cyclic groups  in the study of the regular automorphism groups of projective geometries. After that, numerous papers  have been published on this subject and many of them used algebraic number theory, group theory, finite geometry, and representation and character theory to establish constructive and non-existence results (see, for example \cite{Davis2,Davis,Feng4,Gow,R. Kraemer,Turyn}). Standard introductions to difference sets occur in  \cite{Jungnickel1,Jungnickel2}.

Most of the progress in the research of difference sets has occurred in the Hadamard difference sets (see, for example \cite{AbuGhneim,Davis2,Ding2,Ding,Feng3,R. Kraemer,Smith}).
A difference set is called a \emph{Hadamard (or, alternatively, Menon) difference set} if it has parameters $(4t^2,2t^2-t,t^2-t,t^2)$.
Theorem \ref{thm_2groups} shows that the parameters of all non-trivial difference sets in $2$-groups must take this common form.
\begin{thm}\cite[Theorem II 3.17]{Beth}\label{thm_2groups}
  Suppose a group $G$ of  order $2^r$ contains a $(v,k,\lambda,n)$-difference set where $2\leq k\leq v/2$. Then $r=2(d+1)$ for some non-negative integer $d$, and \[(v,k,\lambda,n)=(2^{2(d+1)},2^{d}(2^{d+1}-1),2^{d}(2^{d}-1), 2^{2d}).\]
\end{thm}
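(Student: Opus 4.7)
The plan is to combine a short character argument (forcing $n$ to be a perfect square) with a $2$-adic analysis of the resulting Diophantine equation (pinning down the exact parameters). Throughout, I use the group-ring identity $DD^{(-1)}=n\cdot 1_{G}+\lambda G$ in $\mathbb{Z}[G]$, which gives the scalar relations $k=\lambda+n$ and $k^{2}=n+\lambda v$.

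\emph{Step 1.} Since $|G|=2^{r}$, the abelianization $G/[G,G]$ is a non-trivial $2$-group, so $G$ admits a non-trivial homomorphism $\chi\colon G\to\{\pm 1\}$. Extending $\chi$ linearly to $\mathbb{Z}[G]\to\mathbb{Z}$ and using $\chi(G)=\sum_{g\in G}\chi(g)=0$, the identity $DD^{(-1)}=n+\lambda G$ yields $\chi(D)^{2}=n$. Hence $n=m^{2}$ for some non-negative integer $m$, and $\chi(D)\equiv k\pmod{2}$ forces $k\equiv m\pmod{2}$.

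\emph{Step 2.} Set $c=2^{r-1}-k$; the hypothesis $k\le v/2$ gives $c\ge 0$, and $k\ge 2$ gives $c\le 2^{r-1}-2$. Substituting $\lambda=k-m^{2}$ into $k(k-1)=\lambda(v-1)$ produces
\[
c^{2}+m^{2}(2^{r}-1)=2^{2(r-1)},
\]
with $c\equiv m\pmod{2}$. Write $c=2^{e}a$ and $m=2^{e}b$ with $a,b$ odd positive integers (a parity check of the equation rules out $v_{2}(c)\ne v_{2}(m)$, since in that case the two sides would have different parity). Dividing by $2^{2e}$ yields
\[
a^{2}+b^{2}(2^{r}-1)=2^{2(r-1-e)};
\]
as the left side is at least $2^{r}$, this forces $e\le(r-2)/2$. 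Reducing mod $2^{r}$ gives $a^{2}\equiv b^{2}\pmod{2^{r}}$, and for $r\ge 3$ the four square roots of $1$ in $(\mathbb{Z}/2^{r})^{\times}$ are $\pm 1$ and $2^{r-1}\pm 1$, so $a\equiv b,\,-b,\,2^{r-1}+b,$ or $2^{r-1}-b\pmod{2^{r}}$. The bound $a\le 2^{r-1-e}-1$ with $a,b>0$ eliminates three of these cases (two immediately by magnitude; the case $a=2^{r-1}-b$ by back-substituting into the displayed equation, which yields $2^{r}b(b-1)=2^{2(r-1-e)}-2^{2(r-1)}$, whose left side is non-negative while its right side is non-positive, with equality forcing the forbidden value $c=2^{r-1}-1$). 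Only $a=b$ remains, and substituting this in forces $b^{2}\cdot 2^{r}=2^{2(r-1-e)}$, so $b=1$ and $e=(r-2)/2$. Therefore $r=2(d+1)$ for some non-negative integer $d$ and $m=2^{d}$.

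\emph{Step 3.} We deduce $n=m^{2}=2^{2d}$, $k=2^{r-1}-m=2^{d}(2^{d+1}-1)$, and $\lambda=k-n=2^{d}(2^{d}-1)$, which are precisely the claimed parameters. The main technical obstacle is Step 2: ruling out the spurious square root $a\equiv 2^{r-1}-b\pmod{2^{r}}$ uses the bound $k\le v/2$ crucially, since without it the trivial difference sets with $k=1$ (or dually $k=v-1$) reappear for every $r$.
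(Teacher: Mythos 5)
The paper does not prove this statement at all: it is quoted verbatim from Beth--Jungnickel--Lenz (Theorem II.3.17) and used as a black box, so there is no internal proof to compare against. Judged on its own, your argument is correct and is essentially the standard textbook route: a surjection $G\to\{\pm1\}$ (which exists because a nontrivial $2$-group has nontrivial abelianization) applied to $DD^{(-1)}=n\cdot 1_G+\lambda G$ forces $n=m^2$, and then the counting identity $k(k-1)=\lambda(v-1)$ is massaged into $c^2+m^2(2^r-1)=2^{2(r-1)}$ with $c=2^{r-1}-k$, which you solve $2$-adically. I checked the algebra: the displayed equation is right, $v_2(c)=v_2(m)$ does follow from the parity of the two sides, the four square roots of $1$ modulo $2^r$ are as you list them for $r\ge 3$, the magnitude bounds $0<a,b<2^{r-1-e}$ kill the cases $a\equiv -b$ and $a\equiv 2^{r-1}+b$, the substitution $a=2^{r-1}-b$ does yield $2^rb(b-1)=2^{2(r-1-e)}-2^{2(r-1)}$ and hence the excluded $k=1$, and $a=b$ forces $b=1$, $e=(r-2)/2$, giving exactly the stated parameters. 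Two small loose ends you should tie up: (i) before writing $c=2^ea$, $m=2^eb$ with $a,b$ odd \emph{positive}, note that $m=0$ gives $c=2^{r-1}$, i.e.\ $k=0$, and $c=0$ makes the odd number $2^r-1>1$ divide $2^{2(r-1)}$, so both are excluded by $2\le k\le v/2$; (ii) your square-root enumeration assumes $r\ge 3$, so dispose of $r\le 2$ separately (for $r\le 2$ the hypotheses force $v=4$, $k=2$, and $\lambda=2/3$ is not an integer, so the statement is vacuous there). With those two sentences added the proof is complete.
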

The central objective on difference sets is to determine which groups contain a difference set.
The existence of difference sets in abelian $2$-groups was completely answered by Kraemer \cite{R. Kraemer} and Jedwab \cite{J.Jedwab} that an abelian $2$-group with order $2^{2d+2}$ has a difference set if and only if the exponent of the group is no more than $2^{d+2}$.
 However, this is not true in the non-abelian case. For instance,  \cite {Davis2,Davis} gave infinite families of difference sets in groups with high exponent.
 Dillon \cite{Dillon} generalized a construction provided by McFarland which could be used in non-abelian $2$-groups.
  \begin{lemma}\cite[Dillon's constructions]{Dillon}\label{Lem_DIllon}
 Let $q$ be a prime power and $d$ a non-negative integer, and let $s=\frac{q^{d+1}}{q-1}$.
Let $G$ be a group containing a central subgroup $E$ of index $s+1$, which is isomorphic to the elementary abelian group of order $q^{d+1}$.
Let $g_0,g_1,\cdots, g_s$ be a set of coset representatives for $E$ in $G$. Let $H_1,H_2,\cdots, H_s$ be the subgroups of $G$ corresponding to the hyperplanes of $E$, under an isomorphism $\phi$, when $E$ is regarded as a vector space of  dimension $d+1$ over $\bF_q$.
Then $D=\cup_{i=1}^s g_iH_i$
 is a difference set  in $G$ with McFarland parameters
 \[(v,k,\lambda,n)=\big(q^{d+1}(s+1), q^{d}s, q^{d}(s-q^{d}), q^{2d}).\]
\end{lemma}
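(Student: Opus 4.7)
The plan is to exploit the centrality of $E$ via character theory of $G$. For any irreducible complex representation $\rho$ of $G$, Schur's lemma forces $\rho|_E$ to act as a scalar character $\chi_\rho \colon E \to \bC^*$, so $\rho(H_i) = \bigl(\sum_{h \in H_i} \chi_\rho(h)\bigr) I$ equals $q^d I$ when $\chi_\rho|_{H_i}$ is trivial and $0$ otherwise. Since $\bZ[G]$ embeds into $\bigoplus_\rho M_{\dim \rho}(\bC)$ through the irreducible representations, proving the group-ring identity $D D^{(-1)} = q^{2d}\cdot 1_G + \lambda \sum_{g\in G} g$ reduces to showing $\rho(D D^{(-1)}) = q^{2d} I$ for every nontrivial $\rho$, with $\lambda$ fixed from the trivial representation. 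I would first verify $|D|=q^d s$: if $g_i h_i = g_j h_j$ with $h_i\in H_i$ and $h_j\in H_j$, then $g_j^{-1}g_i = h_j h_i^{-1} \in E$, forcing $i=j$, so the cosets $g_iH_i$ are disjoint.

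The character computation splits into three cases. In Case (a), $\chi_\rho \neq 1$, and the key claim is that exactly one $H_i$ is contained in $\ker\chi_\rho$. I would fix a nontrivial additive character $\psi\colon \bF_q\to \bC^*$ and use the bijection $\mathrm{Hom}_{\bF_q}(E,\bF_q)\to \widehat E$, $\ell\mapsto \psi\circ\ell$; this map is injective because any nonzero $\bF_q$-linear $\ell$ has image $\bF_q\not\subseteq \ker\psi$, and surjective by a count of $q^{d+1}$ on each side. Writing $\chi_\rho = \psi\circ\ell$, the condition $H_i\subseteq \ker\chi_\rho$ becomes $\ell(H_i)=0$, which, since $\ell(H_i)$ is an $\bF_q$-subspace of $\bF_q$, is equivalent to $H_i=\ker\ell$. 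This singles out a unique index $k(\rho)$ and gives $\rho(D)=q^d\rho(g_{k(\rho)})$, a scalar multiple of a unitary, so $\rho(DD^{(-1)})=q^{2d}I$. In Case (b), $\chi_\rho=1$ but $\rho\neq 1$, so $\rho$ descends to a nontrivial irreducible representation $\overline\rho$ of $G/E$; every $\rho(H_i)=q^d I$, and $\sum_{i=0}^s \overline\rho(\overline{g_i})=0$ by character orthogonality, giving $\rho(D)=-q^d\rho(g_0)$ and again $\rho(DD^{(-1)})=q^{2d}I$. Case (c) is the trivial representation, which yields $\rho(D)=k$ and forces $\lambda=(k^2-q^{2d})/v$; a simplification using $s=(q^{d+1}-1)/(q-1)$ reduces this to $\lambda=q^d(s-q^d)$.

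Combining the three cases produces $DD^{(-1)}=q^{2d}\cdot 1_G + q^d(s-q^d)\sum_{g\in G}g$ in $\bZ[G]$, which is exactly the defining identity of a difference set with McFarland parameters $(q^{d+1}(s+1),\,q^d s,\,q^d(s-q^d),\,q^{2d})$. The main obstacle is Case (a), namely the duality argument pairing characters of the abstract abelian group $E$ (which is elementary abelian of $\bF_p$-rank $a(d+1)$ when $q=p^a$) with $\bF_q$-linear functionals rather than with mere $\bF_p$-linear ones. The $\bF_q$-structure on $E$ is essential, because only it forces $\ell(H_i)$ to be an $\bF_q$-subspace of $\bF_q$ and hence either $0$ or $\bF_q$; without this rigidity a nontrivial central character might vanish on several $\bF_p$-hyperplanes that are not among $\{H_1,\ldots,H_s\}$, and the uniqueness needed to obtain $\rho(D)=q^d\rho(g_{k(\rho)})$ would fail.
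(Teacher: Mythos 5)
Your proof is correct. The paper itself gives no proof of Lemma \ref{Lem_DIllon} (it is quoted from Dillon's paper), so there is nothing to diverge from; your argument is the standard one, and it is exactly the specialization of the paper's own representation-theoretic criterion (Lemma \ref{thm_pdsds}(1)): centrality of $E$ forces $\rho|_E$ scalar, the $\bF_q$-linear duality $\ell\mapsto\psi\circ\ell$ shows a nontrivial central character kills all but exactly one hyperplane $H_{k(\rho)}$, giving $\rho(D)=\pm q^d\rho(g_{k(\rho)})$ in every nontrivial case and hence $\rho(D)\rho(D)^{*}=q^{2d}I$. Your emphasis on why the $\bF_q$-structure (not just the $\bF_p$-structure) is needed for uniqueness of the surviving hyperplane is exactly the right point. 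One small remark: the statement as printed has a typo, $s=\frac{q^{d+1}}{q-1}$ should be $s=\frac{q^{d+1}-1}{q-1}$ (the number of hyperplanes of $\bF_q^{d+1}$), and your Case (c) computation of $\lambda$ implicitly and correctly uses the latter.
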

As for the non-existence in $2$-groups, there were two known  results established by Turyn \cite{Turyn} and Ma \cite{Ma2}, respectively.
 In this paper, we answer the question that whether a central Hadamard difference set exists in the Suzuki $2$-groups of type $A$, a family of non-abelian groups.

\subsection{Linking systems of difference sets}\quad

A linking system of difference sets is a collection of group difference sets, which is first introduced by Davis et al \cite{linking system1}.
\begin{definition}\cite{linking system2}
  Let $G$ be a group of order $v$, and let $l\geq 2$.  Suppose $\mathcal{L}=\{D_{i,j}:0\leq i,j\leq l \tup{ and } i\neq j\}$ is a collection of size $l(l+1)$ of $(v,k,\lambda,n)$-difference sets in $G$. Then, $\mathcal{L}$ is a $(v,k,\lambda,n,l+1)$-linking system of difference sets in $G$ if there are integers $\mu,\eta$ such that for all distinct $i,j,h$, the following equations hold in $\bZ[G]$:
  \[
  D_{h,i}D_{i,j}=(\mu-\eta)D_{h,j}+\eta G,\, D_{i,j}=D_{j,i}^{(-1)}.
  \]
\end{definition}
A linking system of difference sets gives rise to a system of linked symmetric designs, as introduced by Cameron \cite{Cameron},  which is exactly equivalent to a $3$-class $Q$-antipodal cometric association scheme \cite{Dam}.
The central problems are to determine which groups contain  a linking system of difference sets, and how large such a system can be.
Almost all previous constructive results for linking systems of difference sets were in $2$-groups.
Jonathan et al. \cite{linking system2} showed that neither the McFarland/Dillon nor the Spence construction of difference sets can give rise to a linking system of difference sets in non-$2$-groups.
It remains an important open question that whether a linking system of difference sets can exist in non-$2$-groups.

The first examples of linking systems were found in \cite{Cameron2} in the context of \emph{bent sets}.
Jonathan et al. \cite{linking system2} gave a new construction for linking systems of difference sets in $2$-groups, taking advantage of a connection with group difference matrices.
Here, we give a new construction for linking systems of difference sets in non-abelian $2$-groups by using group characters.
\subsection{Partial difference sets}\quad

Let $G$ be a  group of order $v$, and let $D$ be a $k$-subset of $G$. If the differences $d_1d_2^{-1}$  for $d_1,d_2\in D, d_1\neq d_2$ contain  every non-identity element of $D$  exactly $\lambda$ times  and every non-identity element of $G-D$  exactly $\mu$ times, then $D$ is called a \emph{$(v,k,\lambda, \mu )$-partial difference set}  in $G$. In particular, when the identity $1_G\notin D$ and $D^{(-1)}=D$, we call $D$ is  \emph{regular}. Besides, if $D$ is a union of conjugacy classes of $G$, then $D$ is called a \emph{central partial difference set}.  A $(v,k,\lambda, \mu )$-partial difference set with $\lambda=\mu$ is an ordinary $(v,k,\lambda)$-difference set.

A regular partial difference set $D$ in a finite  group $G$ corresponds to a strong regular Cayley graph $\tup{Cay}(G,D)$.
Partial difference sets have close connections with other branches of combinatorics as well as coding theory and finite geometry. There were many partial difference sets constructed  from projective two-weight codes and projective two-intersecting sets in \cite{Calderbank}.
 The survey of Ma \cite{Ma} is an excellent reference for partial difference sets, which provides a thorough survey on the development of the subject up to 1994.
 The case where $G$ is abelian has been studied extensively.
 However, there are very few infinite families of (regular) partial difference sets in non-abelian groups. The only known constructions of such partial difference sets, to our best knowledge, are those in \cite{Ghinelli,Swartz,Tao1}.

In this paper, we consider   partial difference set with a parameter $(n^2,r(n-\varepsilon),r^2 +\varepsilon(n-3r),r^2-\varepsilon r)$ for $\varepsilon=\pm 1$. The partial difference set with such a parameter set is called a \emph{Latin square  type partial  difference set} if $\varepsilon=1$ and a \emph{negative Latin square  type partial difference set}  if $\varepsilon=-1$.
There are many constructions of partial difference sets with both parameter sets (see \cite{Chee,Chen,Davis&Xiang,Hamilton,Tan}). The known partial difference sets  with negative Latin square parameters are relatively rare. Almost all the known groups that contain Latin square or negative Latin square type partial difference sets are abelian $p$-groups and many of these are obtained from quadratic forms \cite{Hamilton}, bent functions \cite{Chee,Chen,Tan}, and various other combinatorial objects. Most notably, Davis and Xiang \cite{Davis&Xiang} construct the first known family of negative Latin square type partial difference sets in non-elementary abelian $2$-groups of exponent $4$ by using quadrics.
Here, we get two kinds of Latin square type central partial difference sets in the non-abelian Suzuki $p$-groups  $A_p(m,\theta)$ with $p>2$.

We conclude this section by giving the framework of the remaining of this article: In Section \ref{preliminary}, we review results on difference sets, partial difference sets and linking systems that we shall use later. Besides, we introduce the Suzuki $p$-groups of type $A$ and list their character tables.
In Section \ref{nonexistence}, we consider the case where $o(\theta)$ is even and establish the non-existence results  in $A_p(m,\theta)$.
In Section \ref{constructions}, we consider the case where $o(\theta)$ is odd,  construct  central difference sets in $A_2(m,\theta)$, and
 give linking systems of difference sets in $A_2(m,\theta)$.
Similarly, when $p>2$, we construct  Latin square type  central partial difference sets in $A_p(m,\theta)$.

\section{Preliminaries}\label{preliminary}
\subsection{ Difference sets and partial difference sets}\quad

 Let $G$ be a finite multiplicative group and consider the group ring $\bZ[G]$. If $D$ is a subset of $G$, we will abuse notation by writing $D$ as an element of $\bZ[G]$, i.e., $D=\sum_{d\in D}d$. We write $D^{(-1)}$ for the group ring element $\sum_{d\in D}d^{-1}$.
Let $A,B \in\bZ[G]$, then $ AB^{(-1)}=\sum_{a\in A,b\in B}ab^{-1}$. The definitions of difference sets and partial difference sets in  $G$ immediately yield the following equivalent definitions by using group ring.
 \begin{lemma}\label{lem_DD^-1}
Let $G$ be a group of order $v$ and let $D$ be a subset of $G$ with cardinality $k$. Then
\begin{enumerate}
  \item $D$ is a $(v,k,\lambda)$-difference set in $G$ if and only if
\[
DD^{(-1)}=\lambda G+(k-\lambda )1_{G} .
\]

  \item  $D$ is a regular  $(v,k,\lambda,\mu)$-partial difference set in $G$ if and only if
\[
DD^{(-1)}=\mu G+(\lambda -\mu)D+ (k-\mu) 1_G .
\]

\end{enumerate}
 \end{lemma}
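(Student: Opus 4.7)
The plan is to expand the product $DD^{(-1)}=\sum_{d_1,d_2\in D} d_1d_2^{-1}$ in the group ring $\bZ[G]$ and match coefficients against the combinatorial definitions. For each $g\in G$, write $[g]\bigl(DD^{(-1)}\bigr)$ for the coefficient of $g$. Then by construction $[g]\bigl(DD^{(-1)}\bigr)=|\{(d_1,d_2)\in D\times D:d_1d_2^{-1}=g\}|$. The coefficient at the identity is immediate: the pairs $(d_1,d_2)$ with $d_1d_2^{-1}=1_G$ are exactly the diagonal pairs, so $[1_G]\bigl(DD^{(-1)}\bigr)=k$. For $g\neq 1_G$, the constraint $d_1d_2^{-1}=g$ automatically forces $d_1\neq d_2$, so these coefficients count precisely the non-trivial difference representations.

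For part (1), the difference set condition asserts that every non-identity $g$ is represented exactly $\lambda$ times as $d_1d_2^{-1}$ with $d_1\neq d_2$. Equivalently, $[g]\bigl(DD^{(-1)}\bigr)=\lambda$ for all $g\neq 1_G$ and $[1_G]\bigl(DD^{(-1)}\bigr)=k$. Summing over $G$ with the appropriate coefficients gives
\[
DD^{(-1)}=k\cdot 1_G+\lambda(G-1_G)=\lambda G+(k-\lambda)1_G,
\]
and the converse is read off the same coefficient identification. For part (2), use the regularity hypotheses $1_G\notin D$ and $D^{(-1)}=D$, and split $G\setminus\{1_G\}$ into $D$ and $G\setminus(D\cup\{1_G\})$. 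The partial difference set condition gives $[g]\bigl(DD^{(-1)}\bigr)=\lambda$ for $g\in D$ and $[g]\bigl(DD^{(-1)}\bigr)=\mu$ for $g\in G\setminus(D\cup\{1_G\})$, while $[1_G]\bigl(DD^{(-1)}\bigr)=k$. Rewriting,
\[
DD^{(-1)}=k\cdot 1_G+\lambda D+\mu\bigl(G-D-1_G\bigr)=\mu G+(\lambda-\mu)D+(k-\mu)1_G,
\]
and again the converse follows by reading the coefficients back.

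There is no genuine obstacle here; the lemma is essentially a tautological restatement of the definitions, and the only care needed is to verify that $D^{(-1)}=D$ together with $1_G\notin D$ is what makes the splitting $G=\{1_G\}\sqcup D\sqcup(G\setminus D\setminus\{1_G\})$ interact cleanly with the coefficient count of $DD^{(-1)}$ (without regularity one would instead recover an identity involving $DD^{(-1)}+D+D^{(-1)}$, which is the standard form for non-regular partial difference sets).
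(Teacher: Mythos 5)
Your proof is correct: the coefficient-of-$g$ computation in $\bZ[G]$ is exactly the routine argument the paper leaves implicit, since it states Lemma 2.1 without proof as an immediate consequence of the definitions. The only minor remark is that in part (2) the forward direction really only needs $1_G\notin D$ (the condition $d_1d_2^{-1}=g\neq 1_G$ already forces $d_1\neq d_2$), while $D^{(-1)}=D$ is what makes the converse direction consistent, but this does not affect the validity of your argument.
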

  \begin{lemma}\cite{Ma2}\label{D-{1}}
   If $D$ is a $(v,k,\lambda,\mu)$-partial difference set in a finite group $G$ and $\lambda\neq \mu$, then $D^{(-1)}=D$. Besides, if $1_G\in D$, then $G\setminus D$ and $D\setminus\{1_G\}$ are also partial difference sets in $G$.
 \end{lemma}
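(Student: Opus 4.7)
The plan is to translate the combinatorial definition into a group-ring identity and then exploit the anti-involution $A \mapsto A^{(-1)}$ on $\bZ[G]$. First I would count the coefficient of each $g \in G$ in $DD^{(-1)}$ directly from the definition: the coefficient of $1_G$ is $k$ (one contribution from each pair $(d,d)$), and for $g \neq 1_G$ (which forces $d_1 \neq d_2$ in any equation $d_1 d_2^{-1} = g$) the coefficient is $\lambda$ if $g \in D$ and $\mu$ if $g \notin D$. This yields the uniform expression
\[
DD^{(-1)} = c\cdot 1_G + (\lambda - \mu)\, D + \mu\, G,
\]
where $c = k - \lambda$ when $1_G \in D$ and $c = k - \mu$ when $1_G \notin D$.

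For the first assertion, I would apply the anti-involution to both sides. Since $(AB)^{(-1)} = B^{(-1)} A^{(-1)}$ one has $(DD^{(-1)})^{(-1)} = DD^{(-1)}$, while $1_G^{(-1)} = 1_G$ and $G^{(-1)} = G$. Comparing the two sides produces
\[
(\lambda - \mu)\,(D - D^{(-1)}) = 0 \quad \text{in } \bZ[G],
\]
and since $\lambda \neq \mu$ and $\bZ[G]$ is torsion-free as an abelian group, this forces $D = D^{(-1)}$.

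For the second assertion, assume $1_G \in D$, so $c = k - \lambda$ and $D = D^{(-1)}$. Set $D_1 = D - 1_G$ and $D_2 = G - D$, the group-ring images of $D \setminus \{1_G\}$ and $G \setminus D$; both are invariant under the anti-involution and neither contains $1_G$. I would then expand $D_1 D_1^{(-1)}$ and $D_2 D_2^{(-1)}$ directly, using the identity for $DD^{(-1)}$ together with the standard relations $GD = DG = kG$ and $G^2 = vG$. Substituting $D = D_1 + 1_G$ and $D = G - D_2$ respectively produces expressions of the standard regular form
\[
X X^{(-1)} = (k' - \mu')\, 1_G + (\lambda' - \mu')\, X + \mu'\, G,
\]
identifying $D_1$ as a regular $(v,\, k-1,\, \lambda - 2,\, \mu)$-partial difference set and $D_2$ as a regular $(v,\, v-k,\, v - 2k + \mu,\, v - 2k + \lambda)$-partial difference set. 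No step presents a genuine obstacle; the only minor subtlety is keeping track of the identity-coefficient bookkeeping so that the uniform group-ring identity above is written correctly regardless of whether $1_G \in D$.
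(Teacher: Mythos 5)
The paper does not prove this lemma at all --- it is quoted from Ma's 1984 paper \cite{Ma2} --- so there is no internal argument to compare against. Your proposal is correct and is essentially the standard proof: the uniform identity $DD^{(-1)} = c\cdot 1_G + (\lambda-\mu)D + \mu G$ with $c=k-\lambda$ or $k-\mu$ according to whether $1_G\in D$ is right, the anti-involution argument legitimately forces $D=D^{(-1)}$ because $\bZ[G]$ is torsion-free, and the parameter sets you extract, $(v,\,k-1,\,\lambda-2,\,\mu)$ for $D\setminus\{1_G\}$ and $(v,\,v-k,\,v-2k+\mu,\,v-2k+\lambda)$ for $G\setminus D$, both check out against the regular form $XX^{(-1)}=(k'-\mu')1_G+(\lambda'-\mu')X+\mu'G$.
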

 Thus we always consider the regular partial difference sets.
\begin{lemma}\label{thm_pdsds}
Let $G$ be a group of order $v$ and let $D$ be a subset of $G$ with cardinality $k$. Then
\begin{itemize}
\item[(1)]\cite[Theorem 2.1]{Davis} $D$ is a $(v,k,\lambda)$-difference set in $G$ if and only if \[\Phi(D)\Phi(D^{(-1)})=(k-\lambda)I\] for every non-trivial  irreducible representation $\Phi$ of $G$.
\item[(2)] $D$ is a regular $(v,k,\lambda,\mu)$-partial difference set in $G$ if and only if \[\Phi(D)^2=(\lambda-\mu)\Phi(D)+(k-\mu)I\]for every non-trivial irreducible representation $\Phi$ of $G$.
\end{itemize}
\end{lemma}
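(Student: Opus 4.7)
The plan is to deduce both parts from the group-ring characterizations in Lemma~\ref{lem_DD^-1} by passing through the regular representation, using the standard fact that for every non-trivial irreducible (matrix) representation $\Phi$ of $G$ one has $\Phi(G)=0$. This last fact is a two-line Schur's-lemma argument: $\Phi(G)$ commutes with every $\Phi(g)$ so is a scalar, and $\Phi(g)\Phi(G)=\Phi(G)$ forces that scalar to be $0$ whenever $\Phi$ is non-trivial. I extend each $\Phi$ linearly to a ring homomorphism $\bC[G]\to M_n(\bC)$ and work inside $\bC[G]$.

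For the forward direction of (1), I start from $DD^{(-1)}=\lambda G+(k-\lambda)1_G$ and apply any non-trivial irreducible $\Phi$; the term $\lambda\Phi(G)$ vanishes and what remains is exactly $\Phi(D)\Phi(D^{(-1)})=(k-\lambda)I$. For (2), $D$ is regular so $D^{(-1)}=D$ (using Lemma~\ref{D-{1}}), and applying $\Phi$ to $D^2=\mu G+(\lambda-\mu)D+(k-\mu)1_G$ gives $\Phi(D)^2=(\lambda-\mu)\Phi(D)+(k-\mu)I$ in the same way.

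The main content is the converse. Given the representation identities, I form
\[
X_1:=DD^{(-1)}-\lambda G-(k-\lambda)1_G,\qquad X_2:=D^2-\mu G-(\lambda-\mu)D-(k-\mu)1_G
\]
in $\bC[G]$. By hypothesis $\Phi(X_i)=0$ for every non-trivial irreducible $\Phi$. Since the regular representation decomposes as the direct sum of the irreducibles with correct multiplicities, an element of $\bC[G]$ annihilated by all non-trivial irreducibles must lie in the isotypic component of the trivial representation, i.e., must be a scalar multiple of $G=\sum_{g\in G}g$. Write $X_i=c_iG$.

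The final step, which is the only thing requiring a small calculation, is to pin down $c_i=0$ by comparing the coefficient of $1_G$ on both sides. In $DD^{(-1)}$ the coefficient of $1_G$ is $\#\{(d_1,d_2):d_1d_2^{-1}=1_G\}=|D|=k$, and likewise for $D^2=DD^{(-1)}$ when $D^{(-1)}=D$. Matching this $k$ with the coefficient of $1_G$ in $c_iG+\lambda G+(k-\lambda)1_G$ (resp.\ $c_iG+\mu G+(\lambda-\mu)D+(k-\mu)1_G$, noting $1_G\notin D$ since $D$ is regular) forces $c_i=0$, so $X_i=0$ and Lemma~\ref{lem_DD^-1} gives back the (partial) difference set conditions. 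I don't anticipate a real obstacle; the only thing one must be careful about in part~(2) is invoking regularity to justify both $D^{(-1)}=D$ and $1_G\notin D$ when extracting the $1_G$-coefficient.
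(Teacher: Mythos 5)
Your proof is correct and follows essentially the same route as the paper's: extend $\Phi$ to $\bC[G]$, use the Schur's-lemma fact that $\Phi(G)=0$ for non-trivial irreducible $\Phi$, and reduce to the group-ring characterization in Lemma~\ref{lem_DD^-1}. In fact you are more careful than the paper on the converse, where the paper simply asserts that $D$ is determined by its images under all irreducible representations; your explicit step of writing $X_i=c_iG$ and forcing $c_i=0$ by comparing the coefficient of $1_G$ (using $|D|=k$, and regularity in part (2)) is exactly the detail needed to make that assertion rigorous.
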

\begin{proof}
 It is known that a subset $D$ of $G$ is completely determined by its image under  the regular representation of $G$. Moreover, the regular representation is completely determined by all the  irreducible representations of $G$. Thus the subset $D$ is completely determined by its image under all the irreducible representations.

 We claim that $\Phi(G)=0$ for any non-trivial irreducible representation $\Phi$ of $G$. Note that $\Phi(h)\Phi(G)=\Phi(G)\Phi(h)$ for any $h\in G$, from Schur's Lemma  we have $\Phi(G)=c I$ for some $c\in \bC$.
 Since $\Phi$ is non-trivial, we can choose $g\in G$ such that $\Phi(g)\neq I$.

Since $\Phi(g)\Phi(G)=\Phi(G)$, we have $c(\Phi(g)-I)=0$. Then we have $c=0$ from the choice of $g$. The results then follow from Lemma \ref{lem_DD^-1}.
\end{proof}

\begin{lemma}\cite[Chapter 12]{character theory}\label{lem_Z(CG)}
  Let $G$ be a finite group. Suppose $C_1,C_2,\cdots,C_s$ are all the distinct conjugacy classes of $G$. Then the group ring elements  $C_1,C_2,\cdots,C_s$  form a basis of the centre of the group algebra $Z(\bC[G])$.
 \end{lemma}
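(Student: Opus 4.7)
The plan is to verify three things: (i) each class sum $C_i$ lies in $Z(\bC[G])$, (ii) the $C_1, \dots, C_s$ are linearly independent in $\bC[G]$, and (iii) they span $Z(\bC[G])$. Each piece is essentially a direct computation using the definition of the group algebra, so there is no real ``obstacle''; the only point that requires a little care is (iii).

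For (i), I would argue that for any $h \in G$, conjugation $x \mapsto h^{-1} x h$ permutes the elements of the conjugacy class $C_i$ (this being the very definition of $C_i$). Hence as a formal sum $h^{-1} C_i h = C_i$, which rearranges to $C_i h = h C_i$. Extending linearly in $h$, we get $C_i z = z C_i$ for all $z \in \bC[G]$, so $C_i \in Z(\bC[G])$.

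For (ii), the conjugacy classes partition $G$, so the supports of $C_1,\dots,C_s$ are pairwise disjoint non-empty subsets of $G$. Since the group elements themselves form a $\bC$-basis of $\bC[G]$, any collection of $\bC[G]$-elements with pairwise disjoint supports and $\{0,1\}$-coefficients is linearly independent.

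For (iii), I would take an arbitrary central element $z = \sum_{g \in G} a_g g \in Z(\bC[G])$. For every $h \in G$, the condition $h z h^{-1} = z$ expands to $\sum_{g} a_g \, h g h^{-1} = \sum_{g} a_g g$. Reindexing the left-hand sum by the bijection $g \mapsto h g h^{-1}$ gives $a_{h^{-1} g h} = a_g$ for all $g,h \in G$, i.e.\ the coefficient function $g \mapsto a_g$ is constant on each conjugacy class. Writing $a_i$ for the common value on $C_i$, we conclude $z = \sum_{i=1}^{s} a_i C_i$, which gives the spanning property and completes the proof.
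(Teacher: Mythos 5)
Your proof is correct and is exactly the standard argument given in the cited reference (James--Liebeck, Chapter 12); the paper itself supplies no proof, simply quoting the result. Nothing to add.
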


We denote the set of irreducible characters of a group $G$ by $\tup{Irr}(G)$ and denote the set of non-trivial irreducible characters of $G$ by $\tup{Irr}(G)^*$.
\begin{definition}\label{def_omega}
Suppose $\{C_i: i=1,2, \cdots, s\}$ is the set of all the distinct conjugacy classes of $G$ and $g_i$ is a representative element of $C_i, i=1,2,\cdots,s$.
For each $\chi\in \tup{Irr}(G)^*$,  define
 \[\omega_{\chi} :Z(\bC[G])\rightarrow \bC, \,\, C_i\mapsto \frac{\chi(C_i)}{\chi(1)}=\frac{|C_i|\chi(g_i)}{\chi(1)}, \, i=1,2,\cdots, s.\]
 \end{definition}\label{wx}
 From Lemma \ref{lem_Z(CG)}, we get that for each $\chi\in\tup{Irr}(G)^*$, $\omega_{\chi}$  is a homomorphism from  $Z(\bC [G])$ to $\bC$.

\begin{thm}\label{char theorem}
Let $G$ be a  group of order $v$ and  $D$ be a union of some conjugacy classes of $G$ with size $k$.
Let $\omega_\chi$ be defined in Definition \ref{def_omega} for each $\chi\in \tup{Irr}(G)^*$.
Then the following holds:
\begin{itemize}
\item[(1)] $D$ is a central $(v,k,\lambda)$- difference set in $G$ if and only if \label{DS equ}\[|\omega_{\chi}(D)|^2=k-\lambda\] for any $\chi\in \tup{Irr}(G)^*$.
\item[(2)] $D$ is a regular central $(v,k,\lambda,\mu)$- partial difference set in $G$ if and only if $\omega_{\chi}(D)$ is a real number and \[\omega_{\chi}(D)^2-(\lambda-\mu)\omega_{\chi}(D)-(k-\mu)=0\] for any $\chi\in \tup{Irr}(G)^*$.
\end{itemize}
\end{thm}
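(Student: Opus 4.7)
The plan is to exploit the commutative-semisimple structure of $Z(\bC[G])$. Since $D$ is a union of conjugacy classes, $D\in Z(\bC[G])$; by Lemma \ref{lem_Z(CG)} the class sums $C_1,\dots,C_s$ form a $\bC$-basis of $Z(\bC[G])$, and the central characters $\{\omega_\chi:\chi\in\tup{Irr}(G)\}$ assemble into an algebra isomorphism $Z(\bC[G])\to\bC^s$, $z\mapsto(\omega_\chi(z))_\chi$. Hence two elements of $Z(\bC[G])$ agree if and only if every $\omega_\chi$ takes the same value on them. Two facts will be used repeatedly: (a) $\omega_\chi(G)=0$ for every $\chi\in\tup{Irr}(G)^*$, because $\sum_{i}|C_i|\chi(g_i)=\sum_{g\in G}\chi(g)=0$, so $G$ spans the one-dimensional kernel of $\prod_{\chi\neq 1}\omega_\chi$; and (b) $\omega_\chi(D^{(-1)})=\overline{\omega_\chi(D)}$, since inversion permutes conjugacy classes and complex-conjugates their character values.

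For part (1), suppose first that $D$ is a central $(v,k,\lambda)$-difference set. By Lemma \ref{lem_DD^-1}(1), $DD^{(-1)}=\lambda G+(k-\lambda)1_G$; applying the algebra homomorphism $\omega_\chi$ for $\chi\in\tup{Irr}(G)^*$ and invoking (a) and (b) yields $|\omega_\chi(D)|^2=k-\lambda$. Conversely, assume this identity holds for every $\chi\in\tup{Irr}(G)^*$ and set $E:=DD^{(-1)}-\lambda G-(k-\lambda)1_G\in Z(\bC[G])$. Then $\omega_\chi(E)=(k-\lambda)-0-(k-\lambda)=0$ for every $\chi\neq 1$, so $E=cG$ for some $c\in\bC$. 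The coefficient of $1_G$ in $DD^{(-1)}$ equals $|\{(d,d):d\in D\}|=k$, while that of $\lambda G+(k-\lambda)1_G$ is $\lambda+(k-\lambda)=k$; hence the $1_G$-coefficient of $E$ is zero, forcing $c=0$, and Lemma \ref{lem_DD^-1}(1) then delivers the conclusion.

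Part (2) runs in parallel. If $D$ is a regular central partial difference set, then $D=D^{(-1)}$ and (b) gives $\omega_\chi(D)=\overline{\omega_\chi(D)}\in\bR$; applying $\omega_\chi$ for $\chi\in\tup{Irr}(G)^*$ to Lemma \ref{lem_DD^-1}(2) rewritten as $D^2=\mu G+(\lambda-\mu)D+(k-\mu)1_G$ yields the claimed quadratic. Conversely, reality together with $\omega_1(D)=k=\omega_1(D^{(-1)})$ and the separation property of the $\omega_\chi$ forces $D^{(-1)}=D$; setting $F:=D^2-\mu G-(\lambda-\mu)D-(k-\mu)1_G$, one obtains $\omega_\chi(F)=0$ for every $\chi\neq 1$, so $F=cG$ for some $c\in\bC$. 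Since $1_G\notin D$, the coefficient of $1_G$ in $D^2=DD^{(-1)}$ equals $k$ and the coefficient of $1_G$ in $\mu G+(\lambda-\mu)D+(k-\mu)1_G$ equals $\mu+(k-\mu)=k$, forcing $c=0$, whereupon Lemma \ref{lem_DD^-1}(2) finishes the proof. The only genuine subtlety in either direction is precisely this identity-coefficient bookkeeping at the trivial character, which silently encodes the standard parameter compatibility relations; everything else is a clean translation of group-ring identities into the spectrum of $Z(\bC[G])$.
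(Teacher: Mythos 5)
Your proof is correct, but it is organized differently from the paper's. The paper proves the theorem by passing through matrix representations: since $D\in Z(\bC[G])$, Schur's Lemma gives $\Phi(D)=\omega_{\chi}(D)I$ for each non-trivial irreducible representation $\Phi$, a unitary choice of basis gives $\Phi(D^{(-1)})=\overline{\omega_{\chi}(D)}I$, and the statement is then read off from Lemma \ref{thm_pdsds}, whose own proof rests on the fact that an element of $\bZ[G]$ is determined by its images under all irreducible representations and that $\Phi(G)=0$. You instead stay entirely inside the commutative algebra $Z(\bC[G])$, using the isomorphism $z\mapsto(\omega_\chi(z))_\chi$ onto $\bC^s$, deducing $\omega_\chi(D^{(-1)})=\overline{\omega_\chi(D)}$ directly from $\chi(g^{-1})=\overline{\chi(g)}$ rather than from unitarity, and pinning down the leftover multiple of $G$ by comparing coefficients of $1_G$. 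Your route is self-contained (it does not invoke Lemma \ref{thm_pdsds} or Schur's Lemma) and it makes explicit something the paper leaves implicit: the converse direction needs the trivial-character coordinate to be accounted for, which in the paper's framework amounts to the parameter relation $k(k-1)=\lambda(v-1)$ holding, whereas your $1_G$-coefficient computation shows it is automatic. The one point you share with the paper rather than resolve is that the "if" direction of part (2) silently assumes $1_G\notin D$ (part of regularity) when computing the identity coefficient of $(\lambda-\mu)D$; this hypothesis is not literally among the stated character conditions, but the paper's own reduction to Lemma \ref{thm_pdsds}(2) has exactly the same implicit assumption, so this is a defect of the statement rather than of your argument.
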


\begin{proof}
Since $D\in Z(\bC [G])$ by Lemma \ref{lem_Z(CG)}, for any non-trivial irreducible representation $\Phi$ of $G$, we deduce that $\Phi(D)=cI$ for some $c\in \bC$ by Schur's Lemma. Let $\chi\in \tup{Irr}(G)^*$ be the corresponding character of $\Phi$. Then
\[
\chi(D)=\tup {Tr}(\Phi(D))=c\chi(1),\] where $\tup {Tr}(\Phi(D))$ denotes the trace of the matrix $\Phi(D)$.
 So we have $\Phi(D)=\omega_{\chi}(D)I$ by the definition of $\omega_{\chi}$ in Definition \ref{wx}.  Choose a suitable basis so that the representation is unitary. Then $\Phi(D^{-1})$ is the conjugate transpose of $\Phi(D)$ and $\omega_{\chi}(D^{-1})$ is the complex conjugate of $\omega_{\chi}(D)$. Thus the results follow from Theorem $\ref{thm_pdsds}$.
\end{proof}

\subsection{ Linking systems of difference sets}
 \begin{definition}\cite{linking system2}
Let $G$ be a group of order $v$, and let  $l\geq 2$ be an integer. Suppose $\mathcal{R}=\{D_1,D_2,\cdots, D_l\}$ is a collection of size $l$ of $(v,k,\lambda, n)$-difference sets in $G$, where $n=k-\lambda$. Then $\mathcal{R}$ is  a \textbf{reduced $(v,k,\lambda,n;l)$-linking system} of difference sets in $G$ of size $l$ if there are integers $\mu$ and $\eta$ such that for all distinct $i,j$ there is some $(v,k,\lambda,n)$-difference set $D_{(i,j)}$ in $G$ satisfying
  \begin{equation}\label{linking system}
     D_i D_j^{-1}=(\mu-\eta)D_{(i,j)}+\eta G.
  \end{equation}

\end{definition}
Note that the difference set $D_{(i,j)}$ in $\eqref{linking system}$ may be not in the reduced linking system $\mathcal{R}$.
\begin{remark}\label{details ab Ls}
  A reduced $(v,k,\lambda,n;l)$-linking system of difference sets in a group $G$ with respect to integers $\mu$ and $\eta$ is equivalent to a $(v,k,\lambda,n;l+1)$-linking system of difference sets in $G$ with respect to $\mu$ and $\eta$ \cite{linking system1,linking system2}.
Thus for simplicity we only consider the reduced linking systems of difference sets in $G$ in the following.
\end{remark}

\begin{lemma}\cite[Lemma 2.2]{linking system2}\label{lem_lm(mn,nu)}
 Suppose $\mathcal{R}=\{D_1,D_2,\cdots, D_l\}$ is a reduced $(v,k,\lambda,n;l)$-linking system of difference sets in a group $G$ with respect to integers $\mu$ and $\eta$. Then
  \begin{equation}\label{exp_munv}
  \eta=\frac{k(k\pm \sqrt{n})}{v} \quad \tup{ and }\quad \mu=\eta\mp\sqrt{n}.
   \end{equation}
\end{lemma}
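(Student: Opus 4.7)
The plan is to extract two independent algebraic relations among $\mu,\eta,k,v,n$ from the defining identity
\[
D_i D_j^{(-1)} = (\mu-\eta)D_{(i,j)} + \eta G,
\]
and then solve. The first relation comes from the augmentation homomorphism $\epsilon:\bZ[G]\to\bZ$, $g\mapsto 1$. Since $\epsilon(D_i)=\epsilon(D_j^{(-1)})=\epsilon(D_{(i,j)})=k$ and $\epsilon(G)=v$, applying $\epsilon$ to both sides at once produces
\[
k^{2} = (\mu-\eta)k + \eta v. \qquad(\star)
\]

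For the second relation, I would evaluate the group-ring product $(D_iD_j^{(-1)})(D_jD_i^{(-1)})$ in two different ways. Reassociating it as $D_i(D_j^{(-1)}D_j)D_i^{(-1)}$ and using Lemma \ref{lem_DD^-1}(1) together with the companion identity $D^{(-1)}D = \lambda G + n\cdot 1_G$ (this needs a brief justification, but it follows from Theorem \ref{thm_pdsds}(1): unitarity of a non-trivial irreducible representation $\Phi$ turns $\Phi(D)\Phi(D^{(-1)})=nI$ into $\Phi(D^{(-1)})\Phi(D)=nI$, and the general principle behind Theorem \ref{thm_pdsds} then lifts this back to $\bZ[G]$), plus the trivial collapses $D_iG = GD_i^{(-1)} = kG$, the product simplifies to $(\lambda k^2 + n\lambda)G + n^2\cdot 1_G$. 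On the other hand, substituting the linking-system relation into each factor and expanding, while invoking $G^2 = vG$ and $D_{(i,j)}D_{(i,j)}^{(-1)} = \lambda G + n\cdot 1_G$, gives
\[
n(\mu-\eta)^{2}\cdot 1_G \;+\; \bigl[\lambda(\mu-\eta)^{2} + 2k\eta(\mu-\eta) + v\eta^{2}\bigr]G.
\]

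Matching the coefficients of $1_G$ in the two expressions yields $(\mu-\eta)^{2}=n$, so $\mu-\eta=\pm\sqrt{n}$. Feeding this back into $(\star)$ gives $\eta v = k(k\mp\sqrt{n})$, which is exactly the claim $\eta = k(k\pm\sqrt{n})/v$ with the paired sign $\mu = \eta\mp\sqrt{n}$. The coefficients of $G$ in the two expansions are automatically consistent using the standard relation $\lambda v = k^{2}-n$, so they contribute no new constraint.

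The only real obstacle is non-commutativity: Lemma \ref{lem_DD^-1}(1) as stated provides $DD^{(-1)}$, whereas the reassociation above requires $D_j^{(-1)}D_j = \lambda G + n\cdot 1_G$, which is a separate (though easily verified) fact. Once that is in hand, the proof is a short compare-coefficients calculation.
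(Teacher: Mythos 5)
The paper does not actually prove this lemma---it is quoted from the Jedwab--Li--Simon reference (their Lemma 2.2)---so there is no in-paper proof to compare against; your argument must stand on its own, and it does: it is correct. The augmentation map gives $(\star)$, and your double evaluation of $(D_iD_j^{(-1)})(D_jD_i^{(-1)})$ correctly forces $(\mu-\eta)^2=n$ from the coefficient of $1_G$; I also checked that the coefficients of $G$ in your two expansions are reconciled exactly by the standard relation $\lambda v=k^2-n$, so no information is lost there. Two small points. First, cancelling $n$ in $n^2=n(\mu-\eta)^2$ requires $n\neq 0$, which holds for non-trivial difference sets (and the conclusion is trivially consistent when $n=0$). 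Second, your justification of the reversed identity $D^{(-1)}D=\lambda G+n\,1_G$ is the right one and is genuinely needed in a non-abelian group; equivalently, $\Phi(D)\Phi(D)^{*}=nI$ with $n>0$ makes $\Phi(D)$ invertible with $\Phi(D)^{-1}=n^{-1}\Phi(D)^{*}$, whence $\Phi(D)^{*}\Phi(D)=nI$. Finally, the second relation can be reached a bit faster by applying a single non-trivial irreducible representation $\Phi$ to the linking identity: since $\Phi(G)=0$, one gets $\Phi(D_i)\Phi(D_j)^{*}=(\mu-\eta)\Phi(D_{(i,j)})$, and multiplying by the conjugate transpose gives $n^{2}I=(\mu-\eta)^{2}nI$ at once. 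That is the same computation as yours in representation form, and it is closer in spirit to how the paper uses Lemma \ref{thm_pdsds} and Theorem \ref{LSequva} elsewhere, but your group-ring version is complete as written.
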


From Lemma \ref{lem_lm(mn,nu)} and Theorem \ref{thm_2groups}, we can obtain that if there exists a linking system $\mathcal{R}$ of difference sets in a $2$-group of order $2^{2m}$, then $\eta=2^{m-2}(2^m-1)$ and $\mu-\eta=-2^{m-1}$.

\begin{thm}\label{LSequva}
  Suppose $D_1$ and $D_2$  are two distinct central  difference sets in a non-abelian $2$-group $G$ of order $2^{2m}$. Suppose $D$ is also a central difference set in $G$.
  Let $\omega_\chi$ be as in Definition \ref{def_omega} for each $\chi\in\tup{Irr}(G)^*$.
  Then
  \[D_1D_2^{(-1)}=-2^{m-1}D+2^{m-2}(2^m-1)G
  \]if and only if
  \begin{equation}\label{char of LS}
    \omega_{\chi}(D_1)\overline{\omega_{\chi}(D_2)}=-2^{m-1}\omega_\chi(D)
  \end{equation}
  holds for any  $\chi\in \tup{Irr}(G)^*$.
  \end{thm}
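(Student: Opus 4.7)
The plan is to apply the homomorphisms $\omega_\chi$ to both sides of the claimed group ring identity and exploit the fact that an element of $Z(\bC[G])$ is uniquely determined by its images under all $\omega_\chi$ with $\chi\in\tup{Irr}(G)$. Since $D_1,D_2,D$ are central (unions of conjugacy classes), Lemma \ref{lem_Z(CG)} places all three in $Z(\bC[G])$, so both sides of the purported equation lie in $Z(\bC[G])$. Under the Wedderburn decomposition $\bC[G]\cong\bigoplus_{\chi\in\tup{Irr}(G)}M_{n_\chi}(\bC)$, the center corresponds to $\bigoplus_\chi\bC$, with the $\chi$-coordinate recovered by $\omega_\chi$. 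Thus the equation in $Z(\bC[G])$ is equivalent to the corresponding scalar equation after applying $\omega_\chi$ for every $\chi\in\tup{Irr}(G)$.

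For the forward direction, I apply $\omega_\chi$ to $D_1D_2^{(-1)}=-2^{m-1}D+2^{m-2}(2^m-1)G$. Using that $\omega_\chi$ is a ring homomorphism on $Z(\bC[G])$ and that $\omega_\chi(G)=0$ for every non-trivial $\chi$ (shown in the proof of Theorem \ref{char theorem}), the right-hand side reduces to $-2^{m-1}\omega_\chi(D)$. For the left-hand side, choose a unitary irreducible representation $\Phi$ affording $\chi$; then $\Phi(D_2^{(-1)})$ is the conjugate transpose of $\Phi(D_2)$, and from $\Phi(D_i)=\omega_\chi(D_i)I$ (Schur's Lemma, as in the proof of Theorem \ref{char theorem}) one obtains $\omega_\chi(D_2^{(-1)})=\overline{\omega_\chi(D_2)}$. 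Hence $\omega_\chi(D_1D_2^{(-1)})=\omega_\chi(D_1)\overline{\omega_\chi(D_2)}$, yielding \eqref{char of LS}.

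For the converse, assume \eqref{char of LS} holds for every $\chi\in\tup{Irr}(G)^*$. Writing $X=D_1D_2^{(-1)}$ and $Y=-2^{m-1}D+2^{m-2}(2^m-1)G$, both elements of $Z(\bC[G])$, the computation above shows $\omega_\chi(X)=\omega_\chi(Y)$ for every non-trivial $\chi$. It remains to check the trivial character $\chi_0$, under which $\omega_{\chi_0}$ is the augmentation map. By Theorem \ref{thm_2groups}, each $D_i$ and $D$ has cardinality $k=2^{m-1}(2^m-1)$, so $\omega_{\chi_0}(X)=k^2$ and
\[
\omega_{\chi_0}(Y)=-2^{m-1}k+2^{m-2}(2^m-1)\cdot 2^{2m}=2^{2m-2}(2^m-1)(-1+2^m)=k^2,
\]
which agree automatically. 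Since $X,Y\in Z(\bC[G])$ have the same image under every $\omega_\chi$ with $\chi\in\tup{Irr}(G)$, they coincide, giving the required group ring identity.

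The argument is essentially a bookkeeping exercise once the characterisation in Theorem \ref{char theorem} is in hand; the only non-formal step is verifying that the trivial character contributes the same value on both sides, which boils down to the Menon parameters from Theorem \ref{thm_2groups}. No serious obstacle is expected.
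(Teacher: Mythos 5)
Your proof is correct and follows essentially the same route the paper intends: the paper omits the details, remarking only that the key point is as in Theorem \ref{thm_pdsds}/Theorem \ref{char theorem}, i.e.\ applying the homomorphisms $\omega_\chi$ and using Schur's Lemma together with the fact that an element of $Z(\bC[G])$ is determined by its images under all $\omega_\chi$. Your explicit verification at the trivial character via the Menon parameter $k=2^{m-1}(2^m-1)$ is exactly the bookkeeping the paper leaves implicit, and it checks out.
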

The key point of this proof is similar to that of Theorem \ref{thm_pdsds}, we no longer prove here. It is worth noting that  Theorem \ref{LSequva} only holds for central difference sets.

\subsection{The Suzuki $p$-groups of type A}\quad

Suppose that $p$ is a prime and  $m$ is a positive integer. Let $\bF_{p^m}$ be the finite field with $p^m$ elements. Suppose $\theta$ is a field automorphism of $\bF_{p^m}$ such that $\theta(a)=a^{p^l}$ for each $a\in\bF_{p^m}$ and for some positive integer $l$.
Set
\[
e=\gcd(l,m),\, f=m/e.
\]
Then $o(\theta)=f$ and $\bF_{\theta}:=\{x\in\bF_{p^m}:\theta(x)=x\}$ is the fixed subfield of $\theta$, and $\bF_{\theta}=\bF_{p^e}$.
\begin{definition}\cite{Suzuki}
 The \textbf{Suzuki $p$-group $A_p(m,\theta)$} of type $A$ defined by a field automorphism $\theta$ of $\bF_{p^m}$ is the set
\[
\bF_{p^m}\times \bF_{p^m}=\{(a,b):a,b\in \bF_{p^m}\}
\]
with multiplication
\[
(a,b)\cdot(c,d)=(a+c,b+d+a\theta(c)),
\]
for any $(a,b),(c,d)\in \bF_{p^m}\times \bF_{p^m}$.
\end{definition}
For simplicity, we set $G:=A_p(m,\theta)$.
Note that if $\theta$ is  trivial,  $G$ is  abelian. We only consider the non-abelian cases here, thus $f>1$.
\begin{definition}\cite[Definition 2.1]{character of Suzuki}\label{f_a}
 For any non-trivial field automorphism $\theta$ of $\bF_{p^m}$ and  $a\in \bF_{p^m}^{*}$, define
\begin{equation}\label{eq_f_a}
f_{a,\theta}:\bF_{p^m}\rightarrow \bF_{p^m}, \; x\mapsto a\theta(x)-x\theta(a).
\end{equation}
\end{definition}
Then $f_{a,\theta}$ is $\bF_\theta$-linear  and $\textup{Im}(f_{a,\theta})$ is an $\bF_{\theta}$-hyperplane of $\bF_{p^m}$.
 Hence for any $j_a\in \bF_{p^m}\setminus\textup{Im}(f_{a,\theta})$, we have
\begin{equation}\label{Eqn_fpm}
\bF_{p^m}=\textup{Im}(f_{a,\theta})\oplus j_a\bF_{p^e}.
\end{equation}

 Let $Z(G)$  and $G'$ be the center and derived subgroup of $G$, respectively. We can read off from \cite{character of Suzuki} Table 3 that  $Z(G)=\{(0,x):x\in\bF_{p^m}\}$ and
 \[
 G'=
 \begin{cases}
 \{(0,x):x\in\tup{Im}(f_{1,\theta})\},& \tup{ if } f=2,\\
 \{(0,x):x\in\bF_{p^m}\}, & \tup{ if } f>2.
\end{cases}
 \]

\begin{lemma} \cite[Lemma 2.8]{character of Suzuki}\label{cc}
Let $G=A_p(m,\theta)$.
Then the class number of $G$ is equal to $p^{m+e}+p^m-p^e$ and all the conjugacy classes of $G$ are listed as follows:
\begin{itemize}
	\item[(1)] The set of conjugacy classes of size $1$ is $\{C_b:b\in\bF_{p^m}\}$, where $C_b=\{(0,b)\}$.
	\item[(2)] The set of conjugacy classes of size $p^{m-e}$ is  $\{C_{a,x}:a\in\bF_{p^m}^*,x\in\bF_{p^e}\}$, where \\$C_{a,x}=\{(a,j_ax+y):y\in \textup{Im}(f_{a,\theta})\}$ and $j_a\in \bF_{p^m}\setminus \tup{Im}(f_{a,\theta})$.
\end{itemize}
Thus $G=\cup_{b\in\bF_{p^m}}C_b\cup_{a\in\bF_{p^m}^*,x\in\bF_{p^e}}C_{a,x}$ is the partition of $G$ into conjugacy classes.
\end{lemma}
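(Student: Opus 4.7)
The plan is to compute conjugation on $G$ directly and then read off the orbits. First I would determine the inverse: solving $(a,b)(c,d)=(0,0)$ yields $(a,b)^{-1}=(-a,-b+a\theta(a))$. A routine multiplication then gives
\[
(c,d)(a,b)(c,d)^{-1}=(a,\,b+c\theta(a)-a\theta(c))=(a,\,b-f_{a,\theta}(c)),
\]
where I use $\theta(-c)=-\theta(c)$ to collapse the cross term. Crucially, the second coordinate $d$ of the conjugating element disappears, so the conjugacy class of $(a,b)$ is exactly $\{a\}\times\bigl(b+\tup{Im}(f_{a,\theta})\bigr)$.

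From here the classification falls out. When $a=0$ one has $f_{0,\theta}\equiv 0$, hence every $(0,b)$ is central and forms the singleton class $C_b$, giving $p^m$ classes of size one. When $a\in\bF_{p^m}^*$, the paragraph following Definition \ref{f_a} identifies $\tup{Im}(f_{a,\theta})$ as an $\bF_\theta$-hyperplane of $\bF_{p^m}$, with cardinality $p^{e(f-1)}=p^{m-e}$, which matches the claimed class size.

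To enumerate the classes at a fixed $a\neq 0$, I would invoke the decomposition $\bF_{p^m}=\tup{Im}(f_{a,\theta})\oplus j_a\bF_{p^e}$ from \eqref{Eqn_fpm}: every coset of $\tup{Im}(f_{a,\theta})$ in $\bF_{p^m}$ has a unique representative of the form $j_ax$ with $x\in\bF_{p^e}$, so the classes meeting $\{a\}\times\bF_{p^m}$ are precisely the $p^e$ sets $C_{a,x}$ in the statement. Summing gives
\[
p^m+(p^m-1)\cdot p^e = p^{m+e}+p^m-p^e
\]
conjugacy classes in all, and the partition $G=\bigcup_b C_b\cup\bigcup_{a,x}C_{a,x}$ is immediate.

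No serious obstacle arises: once the conjugation formula is established, everything else is coset bookkeeping already prepared in Definition \ref{f_a} and \eqref{Eqn_fpm}. The only place that demands a little care is sign tracking when inverting $(c,d)$ and when simplifying the term $(c+a)\theta(-c)$ that appears in the conjugation computation.
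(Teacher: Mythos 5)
Your computation is correct: the inverse $(a,b)^{-1}=(-a,-b+a\theta(a))$ and the conjugation formula $(c,d)(a,b)(c,d)^{-1}=(a,\,b-f_{a,\theta}(c))$ both check out, and from there the classification of classes, the size count $|\tup{Im}(f_{a,\theta})|=p^{m-e}$, and the enumeration via the decomposition $\bF_{p^m}=\tup{Im}(f_{a,\theta})\oplus j_a\bF_{p^e}$ are exactly the right bookkeeping. Note that the paper itself gives no proof here --- it imports the lemma verbatim from \cite[Lemma 2.8]{character of Suzuki} --- so your argument is a self-contained verification rather than a variant of anything in this text; it relies only on facts the paper does state (that $\tup{Im}(f_{a,\theta})$ is an $\bF_\theta$-hyperplane, and the direct-sum decomposition \eqref{Eqn_fpm}), so nothing is missing.
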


Write $\xi_p=\exp{(\frac{2\pi\sqrt{-1}}{p})}$  and let $\textup{Tr}_m$ and $\textup{Tr}_e$ denote the absolute trace function from $\bF_{p^m}$ to $\bF_p$ and $\bF_{p^e}$ to $\bF_p$, respectively. Define
  \begin{equation}\label{psi_v}
    \psi_v(x)=\xi_p^{\textup{Tr}_m(vx)}
  \end{equation}
for each $v\in \bF_{p^m}$ and each $x\in \bF_{p^m}$, and define
    \begin{equation}\label{phi_w}
    \phi_w(y)=\xi_p^{\textup{Tr}_e(wy)}
  \end{equation}
   for each $y\in \bF_{p^e}$ and each $w\in \bF_{p^e}$.

Denote  the set of linear characters of $G$ and the set of non-linear irreducible characters of $G$ of degree $r$ by $\tup{Lin(G)}$ and $\tup{Irr}_{(r)}(G)$, respectively.
 Now we list the character tables of $A_p(m,\theta)$ in  the following two parts.

\subsubsection{The case where $f$ is even}\label{keven}\quad

 When $f=2$, we have $l=e=\frac{m}{2}$ and $\theta(a)=a^{p^e}$ for  each $a\in \bF_{p^m}$.
  From \cite[Lemma 2.2]{character of Suzuki},  we obtain that  $\textup{Im}(f_{a,\theta})=\tup{Ker}(\tup{Tr}_{\bF_{p^m}/\bF_{p^e}})$ for any $a\in\bF_{p^m}^*$, where $\tup{Tr}_{\bF_{p^m}/\bF_{p^e}}$ denotes the relative trace function from $\bF_{p^m}$ to $\bF_{p^e}$.
  Choose $j\in\bF_{p^m}$ such that $j+\theta(j)=1$. Then $j\notin \tup{Ker}(\tup{Tr}_{\bF_{p^m}/\bF_{p^e}})$, and $\bF_{p^{m}}=\tup{Ker}(\tup{Tr}_{\bF_{p^m}/\bF_{P^e}})\oplus j\bF_{p^e}$.
Thus Table \ref{table_A(p,2)} follows directly from Table 6 in \cite{character of Suzuki}.
\begin{table}[H]
\caption{The character table of $G:=A_p(m,\theta)$ with $f=2$} \label{table_A(p,2)}
\begin{tabular}{l|l|l}
\hline
&$(0,b)\in C_b,b\in\bF_{p^m} $&$(a,jx+y)\in C_{a,x},a\in\bF_{p^{m}}^*,x\in\bF_{p^e}$\\
\hline
$\chi_1^{(v,w)}\in \tup{Lin}(G), v\in\bF_{p^m},w\in\bF_{p^e}$&   $\psi_w(b)$&$\psi_v(a)\phi_w(x-a\theta(a))$\\
\hline
$\chi_{p^e}^v\in\tup{Irr}_{(p^e)}(G), v\in\bF_{p^m}\setminus\bF_{p^e}$&   $ p^e\psi_v(b)$&\quad\quad0\\
\hline
\end{tabular}
\end{table}
When $f>2$ is even, we take the notations listed in \cite{character of Suzuki}. Let $T$ be a set of coset representatives for $\bF_p^*$ in $\bF_{p^m}^*$ and $\gamma$ be a primitive element of $\bF_{p^m}^*$. Set $J_1'=T\setminus (\langle \gamma^{p^e+1}\rangle \cap T)$.  Then Table \ref{table_A(p,even)} follows from Table 8 in \cite{character of Suzuki}.
In this table, we omit the expressions of  irreducible characters of degree $p^{\frac{m-2e}{2}}$ acting  on conjugacy classes of $G$, since we will not use them in this paper.
\begin{table}[H]
\caption{The character table of $G:=A_p(m,\theta)$ with $f>2$ even} \label{table_A(p,even)}
\begin{tabular}{c|c|c}
\hline
&$(0,b)\in C_b, b\in\bF_{p^m}$&$(a,j_ax+y)\in C_{a,x},a\in\bF_{p^m}^*, x\in\bF_{p^e}$\\
\hline
$\chi_1^v\in \tup{Lin}(G),v\in\bF_{p^m}$&$1$&$\psi_v(a)$\\
\hline
$\chi_{p^{m/2}}^{(v,s)}\in\tup{Irr}_{(p^{m/2})}(G), v\in J_1', 1\leq s\leq p-1$&$p^{m/2}\psi_{sv}(b)$&$0$\\

\hline
*&*&*\\
\hline
\end{tabular}
\end{table}
\subsubsection{The case where $f>1$ is odd}\label{k odd}\quad

Suppose $p\nmid f$, then $\tup{Tr}_{\bF_{p^m}/\bF_{p^e}}((a\theta(a))^{-1}a\theta(a))=f\neq0$.
So, from \cite[Lemma 2.2]{character of Suzuki}, we can take $j_a=a\theta(a)$.

When $p=2$, let $U=\{x\in\bF_{2^e}:\tup{Tr}_e(x)=0\}$. Fix $u_0\in\bF_{2^e}$ such that $\tup{Tr}_e(u_0)=1$. Then for each $u\in\bF_{2^e}$, we have that $u=\delta u_0+u_1$ for some $\delta\in\bF_2$ and $ u_1\in U$.
For each $v\in \bF_{2^m}^*$, chose $a_v$ such that  $j_{a_v}=a_v \theta(a_v)=v^{-1}$.
 We define an auxiliary function
\begin{equation}\label{kk}
\kappa:\bF_2\rightarrow \{0,1\}\subset \bZ ,\,\, \overline{0}\mapsto 0,\overline{1}\mapsto 1.
\end{equation}
Suppose
 $Q(x)$ is a function from $\bF_{2^e}$ to $\bF_{2}$ given by
\begin{equation}\label{eq_Qx}
 Q(x)=
\begin{cases}
\sum_{s=0}^{\frac{e-1}{2}}\tup{Tr}_e(x^{2^s+1}),&\tup{ if }e\tup{ is odd},\\
\sum_{s=0}^{\frac{e}{2}-1}\tup{Tr}_e(x^{2^s+1})+\tup{Tr}_e(cx^{2^{e/2+1})},&\tup{ if }e\tup{ is even},
\end{cases}
\end{equation}

where $c\in\bF_{2^e}$ such that $c+c^{2^{e/2}}=1$. Write $i=\sqrt{-1}$. Then Table \ref{tableA(2,odd)} follows from Table 7 in \cite{character of Suzuki}.
\begin{table}[H]\label{table1}
\caption{The character table of $G:=A_2(m,\theta)$ with $f>1$ odd} \label{tableA(2,odd)}
\begin{tabular}{c|c|c|c}
\hline
&$(0,b)\in C_b$&\multicolumn{2}{c}{$(a,j_ax+y)\in C_{a,x}$}\\
&$b\in\bF_{2^m}$&\multicolumn{2}{c}{$a\in\bF_{2^m}^*, x\in\bF_{2^e}$}\\
\hline
$\chi_1^v\in \tup{Lin}(G),v\in\bF_{p^m}$&$1$&\multicolumn{2}{c}{$\psi_v(a)$}\\
\hline
$\chi_{2^{\frac{m-e}{2}}}^{(v,w,\epsilon)}\in \tup{Irr}_{(2^{\frac{m-e}{2}})}(G)$&\multirow{2}{*}{$ 2^{\frac{m-e}{2}}\psi_v(b)$}&
$\tup{ if }a=a_vu=a_v(\delta u_0+u_1)\in a_v\bF_{2^e}^*$&$\tup{ if }a\not\in a_v\bF_{2^e}^*$\\

$v\in\bF_{2^m}^*,w\in\bF_{2^e}/\bF_2, \epsilon=\pm 1$&&$2^{\frac{m-e}{2}}(\epsilon i)^{\kappa(\delta)}(-1)^{Q(u_1)+\tup{Tr}_{e}(\delta u_0u_1)}\phi_{w}(u_1)\phi_{u^2}(x)$&$0$\\
\hline
\end{tabular}
\end{table}
We use the notation $\square_{p^m}$ ( resp. $\square_{p^e}$) for the set of non-zero  squares of $\bF_{p^m}$ ( resp. $\bF_{p^e}$ )and  $\blacksquare_{p^m}$ ( resp. $\blacksquare_{p^e}$) for the set of non-squares of $\bF_{p^m}$ ( resp. $\bF_{p^e}$).
  When $p$ is odd, fix $x_0\in\blacksquare_{p^e}$. Set
\begin{equation}\label{eq_xv}
x_v=
\begin{cases}
  1, &\tup{ if } v\in\square_{p^m},\\
  x_0, &\tup{ if } v\in\blacksquare_{p^m}.
\end{cases}
\end{equation}
For each $v\in\bF_{p^m}^*$, choose $a_v$ such that  $vj_{a_v}=va_v\theta(a_v)=x_v$.
Let $T$ be a set of coset representatives for $\bF_p^*$ in $\bF_{p^m}^*$. Then Table  \ref{tableA(p,odd)} follows from Table 7 in \cite{character of Suzuki}.
\begin{table}[H]\label{table1}
\caption{The character table of $G:=A_p(m,\theta)$ with $p>2$ and $f>1$ odd} \label{tableA(p,odd)}
\begin{tabular}{c|c|c|c}
\hline
&$(0,b)\in C_b$&\multicolumn{2}{c}{$(a,j_ax+y)\in C_{a,x}$}\\
&$b\in\bF_{p^m}$&\multicolumn{2}{c}{$a\in\bF_{p^m}^*, x\in\bF_{p^e}$}\\
\hline
$\chi_1^v\in \tup{Lin}(G),v\in\bF_{p^m}$&$1$&\multicolumn{2}{c}{$\psi_v(a)$}\\
\hline
$\chi_{p^{\frac{m-e}{2}}}^{(v,w,s)}\in \tup{Irr}_{(p^{\frac{m-e}{2}})}(G)$&\multirow{2}{*}{$ p^{\frac{m-e}{2}}\psi_{sv}(b)$}&
$\tup{ if }a=a_vu\in a_v\bF_{p^e}^*$&$\tup{ if }a\not\in a_v\bF_{p^e}^*$\\

$v\in T,w\in\bF_{p^e}, 1\leq s\leq p-1$&&$p^{\frac{m-e}{2}}\xi_p^{-\frac{1}{2}sf\tup{Tr}_e(x_vu^2)}\phi_{w}(u)\phi_{sf x_vu^2}(x)$&$0$\\
\hline
\end{tabular}
\end{table}

\section{Non-existence results in $A_p(m,\theta)$ with $f$  even}\label{nonexistence}
Set $G:=A_p(m,\theta)$. Recall that $f=o(\theta),e=m/f$.
In this section, we prove the non-existence of non-trivial central difference sets in $G$ when $p=2$. Similarly, when $p>2$, we
give some non-existence results for Latin square type central partial difference sets in $G$.

\begin{lemma}\label{orthogonal}
  Let $H$ be an abelian additive group, and let $K$ be any subgroup of $H$. Then for each  $\chi\in \tup{Irr}(G)^*$, we have
 \[ \chi(K)=\begin{cases}
          |K|,&\tup{ if } K\subseteq \tup{Ker}\chi,\\
          0, &\tup{ if } K\nsubseteq \tup{Ker}\chi.
          \end{cases}\]
 \begin{proof}
 If $K\nsubseteq \tup{Ker}\chi$,  there exists  $g\in K$ such that $\chi(g)\neq 1$. Then $\chi(K)=\chi(g+K)=\chi(g)\chi(K)$. It deduces that $\chi(K)=0$.
 \end{proof}
\end{lemma}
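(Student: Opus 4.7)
The plan is to split into the two stated cases and use that a character on an abelian group is a homomorphism into $\bC^*$, so its value on a subgroup is either the cardinality (when trivial on that subgroup) or zero (by a shift/absorption argument in the group ring).

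First I would handle the easy case $K\subseteq \tup{Ker}\chi$. By definition of the kernel, $\chi(g)=1$ for every $g\in K$, so $\chi(K)=\sum_{g\in K}\chi(g)=|K|$, which gives the first branch of the piecewise formula directly.

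Next I would treat the case $K\nsubseteq \tup{Ker}\chi$. By hypothesis there exists some $g_0\in K$ with $\chi(g_0)\neq 1$. Because $K$ is a subgroup of $H$, translation by $g_0$ permutes $K$, i.e.\ $g_0+K=K$ as sets. Extending $\chi$ linearly to the group ring and using that $\chi$ is a homomorphism on $H$, this yields
\[
\chi(K)=\chi(g_0+K)=\chi(g_0)\,\chi(K),
\]
so $(1-\chi(g_0))\chi(K)=0$, and since $1-\chi(g_0)\neq 0$ we conclude $\chi(K)=0$.

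There is no real obstacle here; the only things to be careful about are (i) keeping the additive notation in $H$ consistent with the multiplicative convention used elsewhere in the paper for group ring expressions, and (ii) noting that since $H$ is abelian its irreducible characters are one-dimensional, so $\chi$ is genuinely a homomorphism to $\bC^*$ and the absorption step $\chi(g_0)\chi(K)=\chi(g_0+K)$ is justified. These are standard character-theoretic facts, and the full proof is essentially the three lines above.
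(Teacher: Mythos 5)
Your proof is correct and follows exactly the same route as the paper's: for the nontrivial case you pick $g_0\in K$ with $\chi(g_0)\neq 1$ and use the absorption identity $\chi(K)=\chi(g_0+K)=\chi(g_0)\chi(K)$ to force $\chi(K)=0$, which is precisely the argument given in the paper (the case $K\subseteq\tup{Ker}\chi$ being immediate in both). Nothing further to add.
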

\begin{thm} Let $G=A_2(m,\theta)$ and suppose $f$ is even. Then there exists no non-trivial central difference sets in  $G$.
\end{thm}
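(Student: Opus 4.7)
The plan is to invoke the character criterion in Theorem~\ref{char theorem}(1) together with Parseval's identity, exploiting the fact that $G$ possesses a large family of non-linear irreducible characters vanishing off the center. By Theorem~\ref{thm_2groups}, any non-trivial difference set $D$ in $G$ (which has order $2^{2m}$) has Hadamard parameters $(2^{2m}, 2^{m-1}(2^m-1), 2^{m-1}(2^{m-1}-1), 2^{2(m-1)})$, so $|\omega_\chi(D)|^2 = 2^{2(m-1)}$ for every $\chi\in\tup{Irr}(G)^*$.

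The key observation is read off directly from Tables~\ref{table_A(p,2)} and~\ref{table_A(p,even)}: in both cases ($f=2$ and $f>2$ even), there is a family of non-linear irreducible characters of degree $2^{m/2}$ that vanish on every conjugacy class $C_{a,x}$ with $a\neq 0$ and take the value $2^{m/2}\psi_v(b)$ on $(0,b)$, for $v$ ranging over an explicit set $S\subseteq\bF_{2^m}^*$ (namely $S=\bF_{2^m}\setminus\bF_{2^{m/2}}$ when $f=2$, and $S=J_1'$ when $f>2$ is even). Since $D$ is a union of conjugacy classes, setting $B:=\{b\in\bF_{2^m}:(0,b)\in D\}$ and $\hat B(v):=\sum_{b\in B}\psi_v(b)$, the formula $\omega_\chi(D)=\chi(D)/\chi(1)$ collapses (the contributions from non-central classes vanish) to $\omega_\chi(D)=\hat B(v)$ for each such $\chi$. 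The difference set hypothesis thus forces $|\hat B(v)|^2 = 2^{2(m-1)}$ for every $v\in S$.

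The contradiction will come from Parseval's identity for additive characters on $\bF_{2^m}$: isolating the contribution of $v=0$ yields
\[\sum_{v\in\bF_{2^m}^*}|\hat B(v)|^2 \;=\; 2^m|B|-|B|^2 \;\le\; 2^{2(m-1)},\]
with the upper bound attained at $|B|=2^{m-1}$. Summing only over $v\in S$ gives $|S|\cdot 2^{2(m-1)}\le 2^{2(m-1)}$, i.e., $|S|\le 1$. A short counting completes the argument: for $f=2$ one has $|S|=2^m-2^{m/2}\ge 2$ (since $m\ge 2$); for $f>2$ even, using $\gcd(2^e+1,2^m-1)=2^e+1$ (as $f$ is even, $2^e+1\mid 2^{2e}-1\mid 2^m-1$), one has $|S|=|J_1'|=2^e(2^m-1)/(2^e+1)\ge 2$ in every admissible range. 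This contradicts $|S|\le 1$.

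I expect the main obstacle to be the second step, where one must carefully extract from the character tables the characters that vanish on every non-central class and then verify that $\omega_\chi(D)$ reduces to the Fourier coefficient $\hat B(v)$ with $v$ ranging over a set $S$ of the expected size. Once this reduction is secure, the Parseval bound $|S|\le 1$ is so loose that only the crudest counting is needed, and no delicate number-theoretic or arithmetic estimate is required.
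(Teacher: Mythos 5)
Your proposal is correct and follows essentially the same route as the paper: both isolate the degree-$2^{m/2}$ irreducible characters that vanish off the center (parametrized by $v\in\bF_{2^m}\setminus\bF_{2^{m/2}}$ when $f=2$ and by $v\in J_1'$ when $f>2$ is even), reduce $\omega_\chi(D)$ to the additive character sum $\psi_v(B)$, and derive the contradiction from the fact that this set of $v$'s has size at least $2$. The only difference is the finishing step: the paper uses the pointwise bound $|\psi_v(B)|\le 2^{m-1}$ together with its equality case (forcing $B=\Ker(\psi_v)$ or its complement for each such $v$, which cannot hold for two distinct $v$), whereas you apply the Parseval identity $\sum_{v\ne 0}|\hat B(v)|^2=2^m|B|-|B|^2\le 2^{2(m-1)}$ to conclude $|S|\le 1$ directly, a slightly cleaner variant that avoids the equality analysis.
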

\begin{proof}
We assume that $D$ is a non-trivial central difference set in $G$.  According to Theorem \ref{thm_2groups}, $D$ has  parameters \[(2^{2m},2^{m-1}(2^m-1),2^{m-1}(2^{m-1}-1),2^{2(m-1)}).\] Moreover, $|\omega_{\chi}(D)|= 2^{m-1}$ for any non-trivial irreducible character $\chi\in\tup{Irr}(G)^*$ by  Theorem $\ref{char theorem}$. Since $D$ is central, it can be written as \[D=\cup_{b\in B}C_b\cup_{a\in \bF_{2^m}^*,x\in \Gamma(a)}C_{a,x}\] for some $B\subseteq\bF_{2^m}$ and $\Gamma(a)\subseteq \bF_{2^e}$, $a\in\bF_{2^m}^*$, where $C_b$ and $C_{a,x}$ are the conjugacy classes defined in Lemma \ref{cc}.
Since the character tables of $G$ with $f=2$ and $f>2$ even are different, we will consider these two cases separately.

In the case $f=2$, for any non-linear irreducible character $\chi_{2^e}^v$, $v\in \bF_{2^{m}}\setminus \bF_{2^e}$ listed in Table \ref{table_A(p,2)} we have
\begin{equation}\label{psi_v(B)}
 \omega_{\chi_{2^e}^v}(D)=\psi_v(B)=\pm2^{m-1}.
\end{equation}
 It is deduced from Lemma \ref{orthogonal} that $\psi_v(\bF_{2^m})=0$ for each $v\in\bF_{2^m}^*$. Noting that $\psi_v(a)=\pm 1$ for any $a\in \bF_{p^m}$, we have $|\psi_v(B)|\leq 2^{m-1}$ for any subset $B$ of $\bF_{2^m}$. Besides,  the equality holds if and only if
 \[
B=\tup{Ker}(\psi_v) \tup{ or } B=\bF_{2^m}\backslash\tup{Ker}(\psi_v).\]
Thus  \eqref{psi_v(B)} implies that $B=\tup{Ker}(\psi_v)$ or $\bF_{2^m}\backslash\tup{Ker}(\psi_v)$ for each  $v\in \bF_{2^{m}}\setminus \bF_{2^e}$. Then we get a contradiction.

In the case where $f>2$ is even,  for any irreducible character $\chi_{2^{m/2}}^{(v,1)}$, $v\in J_1'$ listed in Table \ref{table_A(p,even)}, we have
\[
\omega_{\chi_{2^{m/2}}^{(v,1)}}(D)=\psi_v(B)=\pm2^{m-1}.
\]
Similar to the proof of  $f=2$, we get a contradiction by $|J_1'|\geq2$. Thus the result follows.
\end{proof}

\begin{thm}
Let $G=A_p(m,\theta)$. Suppose that $f$ is even and $p$ is odd. Then there exists no regular Latin square type central  partial difference sets in $G$ with parameters
\begin{equation}\label{eq_para1}
  (p^{2m},p^{m-1}(p^m-1), p^{2(m-1)}+p^m-3p^{m-1},p^{2(m-1)}-p^{m-1})
\end{equation}
or
\begin{equation}\label{eq_para2}
 (p^{2m},(p^{m-1}+1)(p^m-1),p^{2(m-1)}+p^m-p^{m-1}-2,p^{2(m-1)}+p^{m-1} ).
\end{equation}

\end{thm}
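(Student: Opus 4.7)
The plan is to adapt the character-theoretic strategy used for the previous $p=2$ non-existence theorem, replacing the norm condition $|\omega_\chi(D)|^2 = k - \lambda$ of Theorem~\ref{char theorem}(1) by the quadratic eigenvalue equation of Theorem~\ref{char theorem}(2). Suppose for contradiction that $D$ is a regular Latin square type central PDS in $G = A_p(m,\theta)$ with either parameter set. Since $D$ is central, I would write
\[
D \;=\; \bigcup_{b \in B} C_b \;\cup\; \bigcup_{(a,x) \in \Gamma} C_{a,x},
\]
with $B \subseteq \bF_{p^m}$ and $\Gamma \subseteq \bF_{p^m}^* \times \bF_{p^e}$, and observe that $D = D^{(-1)}$ by Lemma~\ref{D-{1}}, so $B = -B$. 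Both parameter sets take the Latin square form $(n^2, r(n-1), r^2+n-3r, r^2-r)$ with $n = p^m$ and $r = p^{m-1}$ or $r = p^{m-1}+1$ respectively; solving $x^2 - (n-2r)x - r(n-r) = 0$ yields the two possible non-trivial eigenvalues $n-r$ and $-r$.

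I would then specialise to the non-linear irreducible character $\chi_{p^{m/2}}^{(v,s)}$ of Table~\ref{table_A(p,even)} (or $\chi_{p^e}^v$ of Table~\ref{table_A(p,2)} in the $f=2$ sub-case). Since this character vanishes on every class $C_{a,x}$, a direct computation analogous to the one in the previous theorem gives
\[
\omega_{\chi_{p^{m/2}}^{(v,s)}}(D) \;=\; \psi_{sv}(B).
\]
The PDS condition then forces $\psi_{sv}(B) \in \{n-r, -r\} \subset \bZ$ for every $v \in J_1'$ and $1 \le s \le p-1$. Because the target values are rational, Galois conjugation $\xi_p \mapsto \xi_p^s$ gives $\psi_{sv}(B) = \psi_v(B)$ for every $s \in \bF_p^*$, so the character sum $\psi_v(B)$ takes only the two values $n-r$ and $-r$ on the large subset $S := \bF_p^* \cdot J_1' = \bF_{p^m}^* \setminus \langle \gamma^{p^e+1}\rangle$.

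To close the argument I would combine three ingredients. First, Parseval's identity $\sum_{v \in \bF_{p^m}^*} |\psi_v(B)|^2 = |B|(p^m - |B|)$, together with the known two-valued behaviour of $\psi_v(B)$ on $S$. Second, the eigenvalue equation applied to the non-trivial linear characters $\chi_1^v$, which yields $|B| + p^{m-e}\hat{g}(v) \in \{n-r,-r\}$ where $g(a) = |\{x : (a,x) \in \Gamma\}|$, and hence the congruence $|B| \equiv -r \pmod{p^{m-e}}$. Third, the reality of the character sums coming from $B = -B$. Counting the multiplicities with which each eigenvalue is attained on $S$ and comparing to the Parseval total should pin $|B|$ to a narrow arithmetic window that is incompatible with the divisibility pattern of the parameters in each case.

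The main obstacle is controlling the unknown Fourier coefficients $\psi_v(B)$ for the frequencies $v \in \langle \gamma^{p^e+1}\rangle$, about which the $p^{m/2}$-dimensional characters give no direct information; Parseval only yields an inequality rather than an identity on these. Parameter set~2 is the more delicate of the two: both eigenvalues $n-r$ and $-r$ are coprime to $p$, so the clean divisibility contradiction available in set~1 (where both eigenvalues are divisible by $p^{m-1}$) must be replaced by a magnitude/Parseval estimate. Finally, the $f=2$ sub-case uses the different Table~\ref{table_A(p,2)}, whose linear characters are doubly indexed by $\bF_{p^m} \times \bF_{p^e}$ and act non-trivially on $Z(G)$; the corresponding congruence on $|B|$ must be redone with this bookkeeping but along structurally identical lines.
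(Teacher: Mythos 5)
Your setup matches the paper's: you decompose $D$ into conjugacy classes, observe that the non-linear characters ($\chi_{p^e}^v$ for $f=2$, $\chi_{p^{m/2}}^{(v,s)}$ for $f>2$) vanish on every class $C_{a,x}$ so that $\omega_\chi(D)=\psi_{sv}(B)$, and correctly compute the two admissible eigenvalues $n-r$ and $-r$. But the proof stops there: the Parseval-plus-congruence strategy you outline is never executed, and you yourself flag the uncontrolled frequencies in $\langle\gamma^{p^e+1}\rangle$ and the second parameter set as unresolved obstacles. As written, no contradiction is derived, so there is a genuine gap.

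The missing ingredient is an elementary pointwise bound that the paper proves first: if $B\subseteq\bF_{p^m}$ and $\psi_v(B)\in\bZ$, then $-p^{m-1}\le\psi_v(B)\le p^{m-1}$, with equality only when $B=\tup{Ker}(\psi_v)$ or $B=\bF_{p^m}\setminus\tup{Ker}(\psi_v)$. (Integrality forces $B$ to meet all $p-1$ non-trivial cosets of $\tup{Ker}(\psi_v)$ in sets of equal size $b$, so $\psi_v(B)=|W_0|-b$ with $0\le |W_0|,b\le p^{m-1}$.) This kills everything at once: for the second parameter set both eigenvalues $-(p^{m-1}+1)$ and $p^m-p^{m-1}-1$ lie outside $[-p^{m-1},p^{m-1}]$, an immediate contradiction; for the first, the eigenvalue $p^{m-1}(p-1)$ exceeds $p^{m-1}$ because $p>2$, so $\psi_v(B)=-p^{m-1}$ for every relevant $v$, forcing $B$ to be the complement of two distinct kernels simultaneously --- impossible. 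I will add that your Parseval idea could be pushed through (every admissible eigenvalue has modulus at least $p^{m-1}$, so summing $|\psi_v(B)|^2$ over the many relevant $v$ already exceeds $|B|(p^m-|B|)\le p^{2m}/4$), which would give an alternative finish; but you would need to actually carry out that count rather than leave it as a hoped-for ``narrow arithmetic window,'' and the auxiliary congruences and the $B=-B$ symmetry you invoke are not needed for either route.
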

\begin{proof}
We first claim that for any $v\in\bF_{p^m}^*$ and for any subset $B$ of $\bF_{p^m}$, if $\psi_v(B)$ is an integer, then \[-p^{m-1}\leq\psi_v(B)\leq p^{m-1}.\]
Choose $w\in\bF_{p^m}$ such that $\tup{Tr}_m(vw)=1$, where we recall that $\textup{Tr}_m$ denotes the absolute trace function from $\bF_{p^m}$ to $\bF_p$.  Then we have
\[\bF_{p^m}=\cup_{s\in\bF_p}(sw+\tup{Ker}(\psi_v)).\]
Since $\psi_v(B)$ is an integer, we can write  $B=\cup_{s\in\bF_p}(sw+W_s)$, where $W_s\subseteq \tup{Ker}(\psi_v)$ has equal size for all $s\in\bF_p^*$. We assume that $a=|W_0|$ and $b=|W_s|$ for $s\in\bF_p^*$.
Then $\psi_v(B)=a-b$. Since $0\leq a,b\leq p^{m-1}$, we have $-p^{m-1}\leq\psi_v(B)\leq p^{m-1}$. Besides, the equality holds if and only if $a=0,b=p^{m-1}$ or $a=p^{m-1}, b=0$. That is,
\[
B=\bF_{p^m}\setminus\tup{Ker}(\psi_v)\tup{ or }B=\tup{Ker}(\psi_v).\]

Suppose
\[D=\cup_{b\in B}C_b\cup_{a\in \bF_{p^m}^*,x\in \Gamma(a)}C_{a,x},\, \tup{ where }B\subseteq\bF_{p^m}, \Gamma(a)\subseteq\bF_{p^e}, a\in\bF_{p^m}^*.\]
When $f=2$, if $D$ is a central partial difference set with  parameters in \eqref{eq_para1}, form Theorem \ref{char theorem} and Table \ref{table_A(p,2)}, we have
\[\omega_{\chi_{p^e}^v}(D)=\psi_v(B)\in \{-p^{m-1},p^{m-1}(p-1)\},\, \tup{ for any }v\in\bF_{p^m}\setminus\bF_{p^e}.\]
Since $p^{m-1}< p^{m-1}(p-1)$, we obtain that $\psi_v(B)=p^{m-1}$ for any  $v\in\bF_{p^m}\setminus\bF_{p^e}$, this actually can not come true.
Similarly, if $D$ is a central partial difference set with  parameters in \eqref{eq_para2},  we have
\[\omega_{\chi_{p^e}^v}(D)=\psi_v(B)\in \{-p^{m-1}-1,p^m-(p^{m-1}+1)\},\,\tup{ for any }v\in\bF_{p^m}\setminus\bF_{p^e}.\]
Noting that \[-p^{m-1}-1<-p^{m-1}\tup{ and }p^{m-1}< p^m-(p^{m-1}+1),\] we get a contradiction. Thus there exists no central partial difference sets in $G$ with those two kinds of parameters listed in \eqref{eq_para1} and \eqref{eq_para2}.
When  $f>2$ is even, the proof is similar to $f=2$ and we do not give the details here.
\end{proof}
\begin{remark}
 In  \cite{Gow}, Gow and Quinlan have a similar result that there exist no non-trivial central difference sets  in the group of Suzuki type with order $2^{2s}$ under the condition that $s$ is even.
 Note that the Suzuki $2$-group of type $A$  is usually not a  group of Suzuki type defined in \cite{Gow}, unless $o(\theta)=m$.
 Thus the research of central difference sets in $A_2(m,\theta)$ is still meaningful.
\end{remark}
\section{New central difference sets in $A_p(m,\theta)$ with $f>1$  odd }\label{constructions}
 In this section,  we give the constructions of  central difference sets in $ G:=A_p(m,\theta)$ when $p=2$ and  Latin square type central partial difference sets in $G$ when $p$ is odd. We will use the notations mentioned in Section \ref{k odd}.
\subsection{Central difference sets in $G:=A_2(m,\theta)$}\quad

Recall that  \[U=\{x\in \bF_{2^e}: \tup{Tr}_e(x)=0\}\tup{ and  }\tup{Tr}_e(u_0)=1.\] For each $u\in\bF_{2^e}$, we write $u=\delta u_0+u_1$, where $\delta\in\bF_2, u_1\in U$. For each $t\in \bF_{2^m}^*$, take $a_t$ such that $t a_t\theta(a_t)=1$.
\begin{thm}\label{DS1}
 Let $G=A_2(m,\theta)$ with $f>1$ odd.
For each $t\in \bF_{2^m}^*$,  take $B_t=\tup{Ker}(\psi_t)$ or $\bF_{2^m}\setminus \tup{Ker}(\psi_t)$, where $\psi_t$ is as in \eqref{psi_v}. Choose $z\in\bF_{2^e}^*$ and write $\sqrt{z}=z^{2^{e-1}}$. Let $\phi_z$ be as in \eqref{phi_w} and
 set
 \[
 \Gamma(a)=
 \begin{cases}
  \tup{Ker}(\phi_z) \tup{ or } \bF_{2^e}\setminus \tup{Ker}(\phi_z), &\tup{ if }a\in\bF_{2^m}^*\setminus\{a_t\sqrt{z}\},\\
   \varnothing,  &\tup{ if }a=a_t\sqrt{z}.
 \end{cases}
 \]
  Suppose
 \[
D_{(t,z)}=\cup_{b\in B_t}C_b\cup_{a\in \bF_{2^m}^*,x\in \Gamma(a)}C_{a,x},
\]
where $C_b$ and $C_{a,x}$ are the conjugacy classes defined in Lemma \ref{cc}.
  Then $D_{(t,z)}$ is a central difference  set in $G$ with parameters $(2^{2m},2^{m-1}(2^m-1),2^{m-1}(2^{m-1}-1))$.
\end{thm}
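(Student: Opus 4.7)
The plan is to verify the character-theoretic criterion of Theorem~\ref{char theorem}(1): since $|G| = 2^{2m}$, Theorem~\ref{thm_2groups} forces any non-trivial difference set in $G$ to have the claimed Menon parameters, so once I confirm $|D_{(t,z)}| = 2^{m-1}(2^m-1)$ it suffices to prove $|\omega_\chi(D_{(t,z)})| = 2^{m-1}$ for every $\chi \in \tup{Irr}(G)^*$. The size count is immediate from Lemma~\ref{cc}:
\[
|D_{(t,z)}| \;=\; |B_t| + 2^{m-e}\!\!\sum_{a \in \bF_{2^m}^*}\!|\Gamma(a)| \;=\; 2^{m-1} + 2^{m-e}\cdot 2^{e-1}(2^m-2) \;=\; 2^{m-1}(2^m-1).
\]
For a linear character $\chi_1^v$ with $v \in \bF_{2^m}^*$, reading off Table~\ref{tableA(2,odd)} gives $\omega_{\chi_1^v}(D_{(t,z)}) = 2^{m-1} + 2^{m-e}\sum_{a \in \bF_{2^m}^*}|\Gamma(a)|\psi_v(a)$. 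The orthogonality identity $\sum_{a \in \bF_{2^m}^*}\psi_v(a) = -1$ together with $|\Gamma(a)| = 2^{e-1}$ off the single exceptional point $a_t\sqrt{z}$ collapses this to $-2^{m-1}\psi_v(a_t\sqrt{z})$, whose modulus is $2^{m-1}$.

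For a non-linear irreducible character $\chi = \chi_{2^{(m-e)/2}}^{(v,w,\epsilon)}$, I split into two cases according to whether $v = t$. The $C_b$-part of $\omega_\chi(D_{(t,z)})$ equals $\psi_v(B_t)$, which equals $\pm 2^{m-1}$ when $v = t$ and vanishes when $v \neq t$, because $\psi_v$ is trivial on the hyperplane $\tup{Ker}(\psi_t)$ only for $v \in \{0, t\}$. The $C_{a,x}$-part receives no contribution unless $a \in a_v\bF_{2^e}^*$; writing $a = a_v u$ with $u = \delta u_0 + u_1 \in \bF_{2^e}^*$, each summand factors as a unit-modulus phase read from Table~\ref{tableA(2,odd)}, multiplied by $2^{m-e}\sum_{x \in \Gamma(a_v u)}\phi_{u^2}(x)$. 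The key observation is that $\phi_{u^2}$ is trivial on $\tup{Ker}(\phi_z)$ exactly when $u^2 \in \{0, z\}$, i.e., when $u = \sqrt{z}$; consequently the inner sum vanishes for $u \neq \sqrt{z}$ and equals $\pm 2^{e-1}$ at $u = \sqrt{z}$, provided $\Gamma(a_v\sqrt{z}) \neq \varnothing$.

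The two subcases now close together. Since $a_v\theta(a_v) = v^{-1}$ forces $a_v = a_t$ iff $v = t$, the case $v = t$ gives $\Gamma(a_v\sqrt{z}) = \Gamma(a_t\sqrt{z}) = \varnothing$, killing the $C_{a,x}$-part entirely and leaving $|\omega_\chi(D_{(t,z)})| = |\psi_t(B_t)| = 2^{m-1}$. When $v \neq t$, $\Gamma(a_v\sqrt{z})$ is either $\tup{Ker}(\phi_z)$ or its complement, and the unique surviving term at $u = \sqrt{z}$ contributes $\pm 2^{m-e}\cdot 2^{e-1} = \pm 2^{m-1}$ times a complex number of absolute value $1$, again yielding modulus $2^{m-1}$. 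The main technical point — and the design rationale for excising precisely the single orbit $C_{a_t\sqrt{z},\,*}$ — is that the naive choice of a nonempty $\Gamma(a_t\sqrt{z})$ would leave an uncancellable surplus at $(v, u) = (t, \sqrt{z})$ in the non-linear character sum, while every other character has vanishing inner sum at $u = \sqrt{z}$ and is therefore insensitive to the surgery; this is the delicate bookkeeping step I expect to require the most care in the write-up.
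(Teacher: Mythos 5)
Your proposal is correct and follows essentially the same route as the paper's proof: reduce to the criterion $|\omega_\chi(D_{(t,z)})|=2^{m-1}$ via Theorem \ref{char theorem}, use $\sum_{a\in\bF_{2^m}^*}\psi_v(a)=-1$ to collapse the linear-character sum to $-2^{m-1}\psi_v(a_t\sqrt{z})$, and for non-linear characters observe via Lemma \ref{orthogonal} that $\phi_{u^2}(\Gamma(a_vu))$ vanishes unless $u=\sqrt{z}$, then split on $v=t$ versus $v\neq t$ exactly as in \eqref{eq_nonli}. The only additions beyond the paper's argument are the explicit size count and the closing commentary on why the class at $a_t\sqrt{z}$ must be excised, both of which are consistent with the paper.
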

\begin{proof}
According to Theorem \ref{char theorem}, we only need to show that $|\omega_\chi(D_{(t,z)})|= 2^{m-1}$ for each $\chi\in\tup{Irr}(G)^*$.

We first consider the linear characters of $G$. For any  $\chi_1^v$, $v\in \bF_{2^m}^*$ listed in Table $\ref{tableA(2,odd)}$, from Lemma $\ref{cc}$ we have
\begin{align}\label{eq_lin}
   \omega_{\chi_1^v}(D_{(t,z)})=& |B_t|+2^{m-e}\sum_{a\in \bF_{2^m}^*\setminus \{a_t\sqrt{z}\}}\psi_v(a)|\Gamma(a)|\notag \\
  = & |B_t|+2^{m-e}\cdot2^{e-1}\sum_{a\in \bF_{2^m}^*\setminus \{a_t\sqrt{z}\}}\psi_v(a)\notag\\
  =&-2^{m-1}\psi_v(a_t\sqrt{z})=\pm 2^{m-1}.
\end{align}

Then we consider the non-linear irreducible characters of $G$.
For any $\chi_{2^{\frac{m-e}{2}}}^{(v,w, \epsilon)}, v\in\bF_{2^m}^*,w\in\bF_{2^e}/\bF_2,\epsilon=\pm 1$ listed in Table $\ref{tableA(2,odd)}$, we have
\begin{equation*}
   \omega_{\chi_{2^{\frac{m-e}{2}}}^{(v,w,\epsilon)}}(D_{(t,z)}) = \psi_v(B_t)+2^{m-e}\sum_{u=\delta u_0+u_1\in \bF_{2^e}^*}(\epsilon i)^{\kappa(\delta)}(-1)^{Q(u_1)+\tup{Tr}_e(\delta u_0u_1)}\phi_w(u_1)\phi_{u^2}(\Gamma(a_vu)).
\end{equation*}
 We deduce from Lemma $\ref{orthogonal}$ that if $u^2\neq z$ then $\phi_{u^2}(\Gamma(a_vu))=0$.
Suppose that \[\sqrt{z}=\delta u_0+u_1, \delta\in\bF_2, u_1\in U.\]
Then we have \[
 \omega_{\chi_{2^{\frac{m-e}{2}}}^{(v,w,\epsilon)}}(D_{(t,z)}) =\psi_v(B_t)+2^{m-e}(\epsilon i)^{\kappa(\delta)}(-1)^{Q(u_1)+\tup{Tr}_e(\delta u_0u_1)}\phi_w(u_1)\phi_{z}(\Gamma(a_v\sqrt{z})).
\]
Note that\[\psi_v(B_t)=
\begin{cases}
\pm2^{m-1}, &\tup {if } v=t,\\
0, &\tup {if }  v\neq t,
\end{cases}
\]
and
\[\phi_z(\Gamma(a_v\sqrt{z}))=
\begin{cases}
0, &\tup {if }  v= t,\\
\pm2^{e-1}, &\tup {if } v\neq t.
\end{cases}
\]
So we have
\begin{equation}\label{eq_nonli}
 \omega_{\chi_{2^{\frac{m-e}{2}}}^{(v,w,\epsilon)}}(D_{(t,z)})=\begin{cases}
\psi_v(B_t)=\pm2^{m-1}, &\tup {if } v=t,\\
\pm2^{m-1}(\epsilon i)^{\kappa(\delta)}(-1)^{Q(u_1)+\tup{Tr}_e(\delta u_0u_1)}\phi_w(u_1), &\tup {if }  v\neq t.\end{cases}
\end{equation}
Therefore, $|\omega_{\chi_{2^{\frac{m-e}{2}}}}^{(v,w, \epsilon)}(D_{(t,z)})|= 2^{m-1}$  for any non-linear irreducible character $\chi_{2^{\frac{m-e}{2}}}^{(v,w, \epsilon)}\in \tup{Irr}_{(2^{\frac{m-e}{2}})}(G)$.

\end{proof}
\begin{thm}\label{DS2}
  Let $G=A_2(m,\theta)$ with $f>1$ odd.
  For each $z\in\bF_{2^e}^*$, let $\phi_z$ be as in \eqref{phi_w}. Take
  \[D_{z}=\cup_{a\in\bF_{2^m}^*, x\in \Gamma(a)}C_{a,x},\] where
 $\Gamma(a)=\tup{Ker}(\phi_z)$  or $\bF_{2^e}\setminus\tup{Ker}(\phi_z)$  for any $a\in\bF_{2^m}^*$.

  Then $D_z$ is a central difference set in $G$ with parameters $(2^{2m},2^{m-1}(2^m-1),2^{m-1}(2^{m-1}-1),2^{2(m-1)})$.
\end{thm}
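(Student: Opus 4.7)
The plan is to verify the parameter conditions directly via the character-theoretic criterion of Theorem~\ref{char theorem}, namely that $|\omega_\chi(D_z)|=2^{m-1}$ for every $\chi\in\tup{Irr}(G)^*$. The argument will closely mirror the proof of Theorem~\ref{DS1}, but it is structurally simpler because $D_z$ contains no central conjugacy classes $C_b$ and every $\Gamma(a)$ has the same cardinality $2^{e-1}$. A preliminary sanity check confirms the intended size: each $C_{a,x}$ has cardinality $2^{m-e}$, and summing over the $2^m-1$ nonzero values of $a$ and the $2^{e-1}$ elements of $\Gamma(a)$ gives $|D_z|=(2^m-1)\cdot 2^{e-1}\cdot 2^{m-e}=2^{m-1}(2^m-1)$, matching the claimed $k$.

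First I would handle the linear characters $\chi_1^v$, $v\in\bF_{2^m}^*$, from Table~\ref{tableA(2,odd)}. Since $\chi_1^v$ is constant on each $C_{a,x}$ with value $\psi_v(a)$, one computes
\[
\omega_{\chi_1^v}(D_z)
= 2^{m-e}\sum_{a\in\bF_{2^m}^*}\psi_v(a)\,|\Gamma(a)|
= 2^{m-e}\cdot 2^{e-1}\sum_{a\in\bF_{2^m}^*}\psi_v(a)
= -2^{m-1},
\]
using Lemma~\ref{orthogonal} (equivalently, $\sum_{a\in\bF_{2^m}}\psi_v(a)=0$). This already gives $|\omega_{\chi_1^v}(D_z)|=2^{m-1}$ as required.

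Next I would handle the non-linear irreducible characters $\chi_{2^{(m-e)/2}}^{(v,w,\epsilon)}$ from Table~\ref{tableA(2,odd)}. Since the character vanishes on classes $C_{a,x}$ with $a\notin a_v\bF_{2^e}^*$, only terms with $a=a_vu$, $u=\delta u_0+u_1\in\bF_{2^e}^*$, survive, giving
\[
\omega_\chi(D_z)
=2^{m-e}\sum_{u\in\bF_{2^e}^*}(\epsilon i)^{\kappa(\delta)}(-1)^{Q(u_1)+\tup{Tr}_e(\delta u_0u_1)}\phi_w(u_1)\,\phi_{u^2}(\Gamma(a_vu)).
\]
By Lemma~\ref{orthogonal}, the inner sum $\phi_{u^2}(\Gamma(a_vu))$ is $\pm 2^{e-1}$ when $u^2=z$ and vanishes otherwise. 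Because squaring is a bijection on $\bF_{2^e}$, there is a unique such $u$, namely $u=\sqrt{z}=z^{2^{e-1}}$; writing $\sqrt{z}=\delta_z u_0+u_{1,z}$ we obtain
\[
\omega_\chi(D_z)
=\pm 2^{m-1}(\epsilon i)^{\kappa(\delta_z)}(-1)^{Q(u_{1,z})+\tup{Tr}_e(\delta_z u_0u_{1,z})}\phi_w(u_{1,z}),
\]
a complex number of modulus $2^{m-1}$.

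Since the two families above exhaust $\tup{Irr}(G)^*$ (by the character table in Section~\ref{k odd}), Theorem~\ref{char theorem}(1) concludes that $D_z$ is a central $(2^{2m},2^{m-1}(2^m-1),2^{m-1}(2^{m-1}-1))$-difference set. There is no real obstacle in this proof beyond being careful with the bijectivity of squaring on $\bF_{2^e}$, which pins down the unique $u$ contributing to the non-linear character sums; the only minor point worth emphasizing is that replacing $\Gamma(a)$ by $\bF_{2^e}\setminus\tup{Ker}(\phi_z)$ only changes the sign of $\phi_z(\Gamma(a))$, which is absorbed into the $\pm$ in the final modulus computation.
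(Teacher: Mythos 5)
Your proposal is correct and follows essentially the same route as the paper's proof: checking $|\omega_\chi(D_z)|=2^{m-1}$ via Theorem \ref{char theorem}, computing $-2^{m-1}$ for linear characters from $\sum_{a\in\bF_{2^m}^*}\psi_v(a)=-1$, and isolating the unique $u=\sqrt{z}=z^{2^{e-1}}$ with $u^2=z$ in the non-linear character sums via Lemma \ref{orthogonal}. Your explicit remarks on the bijectivity of squaring and on the sign change when $\Gamma(a)$ is the complement of $\tup{Ker}(\phi_z)$ are points the paper leaves implicit, but the argument is the same.
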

\begin{proof}
Similar to the proof of Theorem $\ref{DS1}$, we only need to show that $|\omega_\chi(D_{(t,z)})|= 2^{m-1}$ for each $\chi\in\tup{Irr}(G)^*$.
 For any  $\chi_1^v$, $v\in \bF_{2^m}^*$, listed in Table$ \ref{tableA(2,odd)}$, from Lemma $\ref{cc}$
we have
\begin{equation}\label{linD1}
  \omega_{\chi_1^v}(D_z)=2^{m-n}\cdot2^{e-1}\sum_{a\in \bF_{p^m}^*}\psi_v(a)=2^{m-1}\cdot(-1)=-2^{m-1}.
\end{equation}
 For any  $\chi_{2^{\frac{m-e}{2}}}^{(v,w, \epsilon)} \in \tup{Irr}_{(2^{\frac{m-e}{2}})}(G)$ listed in Table $\ref{tableA(2,odd)}$, we have
\begin{align}\label{nonlinD1}
 \omega_{\chi_{2^{\frac{m-e}{2}}}^{(v,w,\epsilon)}}(D_z)&=2^{m-e}\sum_{u=\delta u_0+u_1\in \bF_{2^e}^*}(\epsilon i)^{\kappa(\delta)}(-1)^{Q(u_1)+\tup{Tr}_e(\delta u_0u_1)}\phi_w(u_1)\phi_{u^2}(\Gamma(a_vu))
\end{align}
Write $\sqrt{z}=z^{2^{e-1}}$ and suppose $\sqrt{z}=\delta u_0+u_1, \,\delta\in\bF_2, u_1\in U$. Then
\begin{align}\label{nonlinD}
  \eqref{nonlinD1}&=2^{m-e}(\epsilon i)^{\kappa(\delta)}(-1)^{Q(u_1)+\tup{Tr}_e(\delta u_0u_1)}\phi_w(u_1)\phi_{z}(\Gamma(a_vz))\nonumber\\
&=\pm2^{m-1}(\epsilon i)^{\kappa(\delta)}(-1)^{Q(u_1)+\tup{Tr}_e(\delta u_0u_1)}\phi_w(u_1).
\end{align}
Therefore, $|\omega_{\chi_{2^{\frac{m-e}{2}}}^{(v,w,\epsilon)}}(D_z)|=2^{m-1}$.
\end{proof}

\begin{prop}
 The constructions of central difference sets given in Theorem \ref{DS1} and Theorem \ref{DS2} are both covered by Dillon's construction listed in Lemma \ref{Lem_DIllon}.
 \begin{proof}
Consider the difference set $D_{(t,z)}=\cup_{b\in B_t}C_b\cup_{a\in \bF_{2^m}^*,x\in \Gamma(a)}C_{a,x}$ given in Theorem \ref{DS1}.
 For any $a\in\bF_{2^m}^*$, we set
 \begin{align*}
 X_a&=\{(0,x):x\in j_a\tup{Ker}(\phi_z)\oplus \tup{Im}(f_{a,\theta})\}.
 \end{align*}
Note that $\{ j_a\tup{Ker}(\phi_z)\oplus \tup{Im}(f_{a,\theta}):a\in\bF_{2^m}^*\}$ are all the hyperplanes of $\bF_{2^m}$ and
\[
X_{a_t\sqrt{z}}=\{(0,x):x\in j_{a_t}z\tup{Ker}(\phi_z)\oplus\tup{Im}(f_{a_t,\theta})\}=\{(0,x):x\in\tup{ker}(\psi_t)\}.
\]
 Take $c\in\bF_{2^m}\setminus \tup{Ker}(\psi_t)$ and $d\in\bF_{2^n}\setminus \tup{Ker}(\phi_z)$, we have
 \begin{align*}
 \cup_{b\in B_t}C_b=\begin{cases}X_{a_t\sqrt{z}},&\tup{ if }B_t=\tup{Ker}(\psi_t),\\(0,c)X_{a_t\sqrt{z}},&\tup{ if }B_t=\bF_{p^m}\setminus\tup{Ker}(\psi_t),\end{cases}
 \end{align*}
 and for any $a\in\bF_{2^m}^*$,
 \begin{align*}
 \cup_{x\in \Gamma(a)}C_{a,x}=\begin{cases}(a,0)X_a,&\tup{ if }\Gamma(a)= \tup{Ker}(\phi_z),\\(a,j_ad)X_a,&\tup{ if }\Gamma(a)=\bF_{2^e}\setminus \tup{Ker}(\phi_z).\end{cases}
 \end{align*}
Therefore, the central difference sets $D_{(t,z)}$ given in Theorem \ref{DS1} is a Dillon's construction. Mimic the method above, we obtain that the construction of central difference sets given in Theorem \ref{DS2} is also a Dillon's construction.
 \end{proof}
\end{prop}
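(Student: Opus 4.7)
The plan is to recognise that in each case the central elementary abelian subgroup $E := Z(G) = \{(0,x) : x \in \bF_{2^m}\}$ plays the role of $E$ in Dillon's Lemma \ref{Lem_DIllon}, with $q=2$, $d+1=m$, and index $s+1 = [G:E] = 2^m$. The candidate hyperplanes of $E$ will be $X_a := \{(0,y) : y \in j_a \tup{Ker}(\phi_z) \oplus \tup{Im}(f_{a,\theta})\}$ indexed by $a \in \bF_{2^m}^*$. Using \eqref{Eqn_fpm} together with $\dim_{\bF_2} \tup{Ker}(\phi_z) = e - 1$ and $\dim_{\bF_2} \tup{Im}(f_{a,\theta}) = m - e$ (from the $\bF_\theta$-hyperplane property), a direct-sum dimension count yields $\dim_{\bF_2} X_a = m - 1$, so each $X_a$ is indeed an $\bF_2$-hyperplane of $E$.

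Next I would verify that the $X_a$'s are pairwise distinct; combined with the fact that $E$ has exactly $2^m - 1$ hyperplanes, this shows $\{X_a : a \in \bF_{2^m}^*\}$ exhausts all of them. Then, using Lemma \ref{cc} and the group law of $G$, I would compute that $\bigcup_{x \in \Gamma(a)} C_{a,x}$ equals $(a,0) X_a$ when $\Gamma(a) = \tup{Ker}(\phi_z)$ and equals $(a, j_a d) X_a$ for some $d \notin \tup{Ker}(\phi_z)$ otherwise. The key computation will be identifying the $B_t$-piece in $D_{(t,z)}$ as a coset of a particular $X_a$: I would show $X_{a_t \sqrt{z}} = \{(0,x) : x \in \tup{Ker}(\psi_t)\}$ by noting that $\sqrt{z} \in \bF_{2^e} = \bF_\theta$ forces $\theta(\sqrt{z}) = \sqrt{z}$, whence $j_{a_t \sqrt{z}} = a_t \sqrt{z}\, \theta(a_t \sqrt{z}) = z/t$ and $\tup{Im}(f_{a_t \sqrt{z}, \theta}) = \sqrt{z}\, \tup{Im}(f_{a_t, \theta}) = \tup{Im}(f_{a_t, \theta})$ (the latter since $\tup{Im}(f_{a_t,\theta})$ is $\bF_\theta$-stable), and then verifying via a trace identity that the resulting hyperplane is $\tup{Ker}(\psi_t)$.

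With these identifications, the assembly is routine. For $D_{(t,z)}$, I choose $g_0 = (a_t \sqrt{z}, 0)$ as the omitted coset representative, pair the identity coset with $X_{a_t \sqrt{z}}$ (carrying the $B_t$-piece, possibly shifted by $(0,c)$ with $c \notin \tup{Ker}(\psi_t)$), and for each $a \in \bF_{2^m}^* \setminus \{a_t \sqrt{z}\}$ pair the coset containing $(a,0)$ with $X_a$ (shifted by $(a, j_a d)$ if needed). This expresses $D_{(t,z)}$ as $\bigcup_{i=1}^s g_i H_i$ with $s = 2^m - 1$ distinct hyperplanes $H_i$, matching Dillon's form exactly. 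For $D_z$, one takes $g_0 = (0,0)$ as the omitted (identity) coset and each $a \in \bF_{2^m}^*$ supplies $(a, \ast) X_a$; again all $2^m - 1$ hyperplanes are used. The main obstacle will be the two technical facts behind the second paragraph: proving the $X_a$'s are pairwise distinct and identifying $X_{a_t \sqrt{z}}$ with $\tup{Ker}(\psi_t)$, both of which reduce to trace-orthogonality computations in $\bF_{2^m}$ exploiting the $\bF_\theta$-linearity of $f_{a,\theta}$.
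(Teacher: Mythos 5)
Your proposal is correct and follows essentially the same route as the paper's own proof: you identify $Z(G)$ as the Dillon subgroup $E$, define the same hyperplanes $X_a$ via $j_a\tup{Ker}(\phi_z)\oplus\tup{Im}(f_{a,\theta})$, make the same key identification $X_{a_t\sqrt z}=\{(0,x):x\in\tup{Ker}(\psi_t)\}$, and express each conjugacy-class block as a coset of the appropriate $X_a$. The only difference is that you flag and sketch two verifications (the dimension count showing each $X_a$ is a hyperplane, and the pairwise distinctness of the $X_a$) that the paper simply asserts.
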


 Now we give two examples of central difference sets in $G:=A_2(m,\theta)$ with $m=f$ in Theorem \ref{DS1} and Theorem \ref{DS2}, respectively.
\begin{example}\label{eg1}
 For each $t\in \bF_{2^m}^*$, let $D_t=\cup_{b\in B_t}C_b\cup_{a\in J_{t}^0}C_{a,0}\cup_{a\in J_{t}^1}C_{a,1}$,
where $B_t=\tup{Ker}(\psi_t)$ or $\bF_{2^m}\setminus \tup{Ker}(\psi_t)$ , $J_{t}^0\cup J_t^1=\bF_{2^m}^*\setminus \{a_t\}$ and $J_{t}^0\cap J_t^1=\varnothing$. Then $D_t$ is a non-trivial central difference set in $G$ by Theorem \ref{DS1}.
\end{example}
\begin{example}\label{eg2}
Let $D=\cup_{a\in J^0}C_{a,0}\cup_{a\in J^1}C_{a,1}$,
where  $J^0\cup J^1=\bF_{2^m}^*$ and $J^0\cap J^1=\varnothing$. Then D is a non-trivial central difference set in $G$ by Theorem \ref{DS2}.
\end{example}
\begin{corollary}
Let $G=A_2(m,\theta)$ with $m=f$ odd. Then any   non-trivial central difference set in $G$ arises in the manner described in Example $\ref{eg1}$ or Example $\ref{eg2}$ .
\end{corollary}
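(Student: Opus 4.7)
The plan is to exploit the fact that $m=f$ forces $e=1$, which trivializes enough of the non-linear character data to make a direct classification possible. First I would write any central subset of $G=A_2(m,\theta)$ as
\[D=\bigcup_{b\in B}C_b\cup\bigcup_{a\in\bF_{2^m}^*,\,x\in\Gamma(a)}C_{a,x},\qquad B\subseteq\bF_{2^m},\ \Gamma(a)\subseteq\bF_{2}=\{0,1\}.\]
By Theorem \ref{thm_2groups} a non-trivial central difference set must have parameters $(2^{2m},2^{m-1}(2^m-1),2^{m-1}(2^{m-1}-1))$, so by Theorem \ref{char theorem}(1) the condition $|\omega_\chi(D)|^2=2^{2(m-1)}$ has to hold for every $\chi\in\tup{Irr}(G)^*$.

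The key step is to unpack this condition on the non-linear irreducible characters. Since $e=1$ the parameter $w\in\bF_2/\bF_2$ in Table \ref{tableA(2,odd)} is trivial, so these characters form the family $\chi_{2^{(m-1)/2}}^{(v,\epsilon)}$ indexed only by $v\in\bF_{2^m}^*$ and $\epsilon=\pm 1$. Substituting $u_0=1$, $U=\{0\}$, $u=1$ (forcing $\delta=1$, $u_1=0$) into the table yields
\[\omega_{\chi_{2^{(m-1)/2}}^{(v,\epsilon)}}(D)=\psi_v(B)+2^{m-1}\epsilon i\,S_v,\qquad S_v:=\sum_{x\in\Gamma(a_v)}(-1)^x\in\{-1,0,1\},\]
so squaring moduli gives $\psi_v(B)^2+2^{2(m-1)}S_v^2=2^{2(m-1)}$ for every $v\in\bF_{2^m}^*$. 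Together with the elementary bound $|\psi_v(B)|\leq 2^{m-1}$ (with equality iff $B\in\{\tup{Ker}(\psi_v),\bF_{2^m}\setminus\tup{Ker}(\psi_v)\}$, as already used in Section \ref{nonexistence}), this produces a clean dichotomy at each $v$: either (i) $S_v=0$, $|\Gamma(a_v)|\in\{0,2\}$, and $B$ equals $\tup{Ker}(\psi_v)$ or its complement, or (ii) $\psi_v(B)=0$ and $|\Gamma(a_v)|=1$.

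Since distinct $v_1,v_2\in\bF_{2^m}^*$ give distinct hyperplanes $\tup{Ker}(\psi_{v_1})\neq\tup{Ker}(\psi_{v_2})$, alternative (i) can occur for at most one $v_0$. This splits the argument into two cases: either (ii) holds for every $v\in\bF_{2^m}^*$, in which case $\psi_v(B)=0$ for all $v\neq 0$ and Fourier inversion on $\bF_{2^m}$ forces $B\in\{\emptyset,\bF_{2^m}\}$; or (i) holds for a unique $v_0$, pinning $B$ down to a single hyperplane or its complement and leaving only $\Gamma(a_{v_0})\in\{\emptyset,\{0,1\}\}$ to be determined. In each case the size identity $|D|=|B|+2^{m-1}\sum_a|\Gamma(a)|=2^{m-1}(2^m-1)$ discards exactly one candidate: $B=\bF_{2^m}$ in the first case and $\Gamma(a_{v_0})=\{0,1\}$ in the second (both alternatives would give $|D|=2^{m-1}(2^m+1)$). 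The survivors are precisely Example \ref{eg2} (with $J^0=\{a_v:\Gamma(a_v)=\{0\}\}$, $J^1=\{a_v:\Gamma(a_v)=\{1\}\}$) and Example \ref{eg1} (with $t=v_0$, $B_t=B$, and the partition of $\bF_{2^m}^*\setminus\{a_t\}$ read off from the remaining $\Gamma(a_v)$).

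Conceptually the argument is elementary; the single place that requires care is the specialization of Table \ref{tableA(2,odd)} to $e=1$ in the displayed formula above, since the auxiliary quantities $u_0,U,\kappa,Q$ all degenerate and one must verify that their combined contribution really produces the real–imaginary split $\psi_v(B)+2^{m-1}\epsilon i S_v$. Once that closed form is in hand, everything that follows is a mechanical case split controlled by two numerical identities.
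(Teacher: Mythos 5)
Your proposal is correct and follows essentially the same route as the paper: apply the character-theoretic criterion of Theorem \ref{char theorem}, specialize Table \ref{tableA(2,odd)} to $e=1$ to get $\omega_\chi(D)=\psi_v(B)+2^{m-1}\epsilon i\,\phi_1(\Gamma(a_v))$ on the non-linear characters, derive the dichotomy $\psi_v(B)=0$ versus $B$ equal to a hyperplane or its complement, and finish with the size count. The only (welcome) difference is that you explicitly eliminate the edge cases $B=\bF_{2^m}$ and $\Gamma(a_{v_0})=\{0,1\}$ via the cardinality identity, whereas the paper disposes of the second the same way and passes over the first in silence.
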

\begin{proof}
 In this case, we have $\bF_{2^e}=\bF_2$.
 Suppose $D=\cup_{b\in B}C_b\cup_{a\in\bF_{2^m}^*, x\in \Gamma(a)}C_{a,x}$, where $B\subseteq\bF_{2^m},\Gamma(a)\subseteq\bF_2$.
 Then  $D$ is a central difference set in $G$ if and only if
 \[\left\{
   \begin{array}{ll}
     |D|=2^{m-1}(2^m-1), \\
     |\omega_\chi(D)|=2^{m-1} \tup{ for any } \chi\in\tup{Irr}(G)^*.
   \end{array}
 \right.
\]
 That is,
\[\left\{
   \begin{array}{ll}
     |B|+2^{m-1}\sum_{a\in\bF_{2^m}^*}|\Gamma(a)|= 2^{m-1}(2^m-1),\\
    |B|+2^{m-1}\sum_{a\in\bF_{2^m}^*}|\Gamma(a)|\psi_v(a)=\pm 2^{m-1}\tup{ for all } v\in \bF_{2^m}^*,\\
     \psi_v(B)+2^{m-1}\phi_1(\Gamma(a_v))=\pm 2^{m-1}\tup{ for all } v\in \bF_{2^m}^*.
   \end{array}
 \right.
\]
From the last equality we deduce that $2^{m-1} \mid\psi_v(B)$ for all $ v\in \bF_{2^m}^*$.
Note that $|\psi_v(B)|\leq 2^{m-1}$ for any $v\in\bF_{2^m}^*$ and the equality holds if and only if
\[B=\tup{Ker}(\psi_t)\tup{or }\bF_{2^m}\setminus \tup{Ker}(\psi_t)\tup{ for some }t\in\bF_{2^m}^*.\]  Thus we have the following two cases to consider.

Case 1. $B=\varnothing$.
It follows that $\phi_1(\Gamma(a_v))=\pm 1$ for all $ v\in \bF_{2^m}^*$, where  recall that $a_v$ is the element uniquely determined by $va_v\theta(a_v)=1$. Thus $|\Gamma(a)|=1$ for all $a\in \bF_{2^m}^*$. This corresponds to Example $\ref{eg2}$.

Case 2. $B=\tup{Ker}(\psi_t)$ or $\bF_{2^m}\setminus \tup{Ker}(\psi_t)$ for some $t\in\bF_{2^m}^*$.
In this case, we have
\[
\psi_t(B)=\pm 2^{m-1}\tup{ and }\psi_v(B)=0\tup{ for any }v\neq t\in\bF_{2^m}^*.
\]
It deduces that $\phi_1(\Gamma(a_t))=0$ and $\phi_1(\Gamma(a_v))=\pm 1$ for any $v\neq t\in\bF_{2^m}^*$. Noting that
$\sum_{a\in\bF_{2^m}^*}|\Gamma(a)|= 2^m-2$, we have
\[
|\Gamma(a_t)|=0\tup{ and }|\Gamma(a_v)|=1\tup{ for any }v\neq t\in\bF_{2^m}^*.
\]
This corresponds to Example $\ref{eg1}$.

\end{proof}
\subsection{Linking systems of central difference sets in $A_2(m,\theta)$ }\label{linking systems}\quad

  In this section, we give some reduced linking systems of difference sets in $G=A_2(m,\theta)$ with parameters \[(2^{2m},2^{m-1}(2^m-1),2^{m-1}(2^{m-1}-1),2^{2(m-1)}; 2^e-1)\] with respect to difference sets constructed in Theorem \ref{DS1} and Theorem \ref{DS2}, respectively.

\begin{thm}\label{thm_lk1}
For each $t\in \bF_{2^m}^*$, $z\in \bF_{2^e}^*$, write $\sqrt{z}=z^{2^{e-1}}$ and  let
 \[
D_{(t,z)}=\cup_{b\in B_{(t,z)}}C_b\cup_{a\in \bF_{2^m}^*\setminus\{a_t\sqrt{z}\},x\in \Gamma_{(t,z)}}C_{a,x},
\]
 where
\[
B_{(t,z)}=\tup{Ker}(\psi_t)\tup{ or }\bF_{2^m}\setminus \tup{Ker}(\psi_t),\, \Gamma_{(t,z)}=\tup{Ker}(\phi_z)\tup { or }\bF_{2^e}\setminus \tup{Ker}(\phi_z).
\]
Suppose \[\mathcal{R}_t=\{ D_{(t,z)}: z\in \bF_{2^e}^*\}.\] Then $\mathcal{R}_t$ is a
  $(2^{2m},2^{m-1}(2^m-1),2^{m-1}(2^{m-1}-1),2^{2(m-1)}; 2^e-1)$-reduced linking system of difference sets  in $G=A_2(m,\theta)$.
  \end{thm}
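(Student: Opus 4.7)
The plan is to invoke Theorem~\ref{LSequva}, which reduces the desired linking relation to the character identity
\[
\omega_\chi(D_{(t,z_1)})\,\overline{\omega_\chi(D_{(t,z_2)})} \;=\; -2^{m-1}\,\omega_\chi(D_{(z_1,z_2)})\quad\text{for all }\chi\in\tup{Irr}(G)^*,
\]
holding simultaneously for each distinct pair $z_1,z_2\in\bF_{2^e}^*$, where $D_{(z_1,z_2)}$ is some central $(2^{2m},2^{m-1}(2^m-1),2^{m-1}(2^{m-1}-1))$-difference set in $G$. Since $z_1\ne z_2$ forces $z_1+z_2\in\bF_{2^e}^*$, I propose to take $D_{(z_1,z_2)}$ to be another difference set from the Theorem~\ref{DS1} family with parameter $z_1+z_2$, choosing the signs of $B$ and $\Gamma$ (depending only on $z_1,z_2$) to absorb the residual signs that arise. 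For definiteness, fix the members of $\mathcal{R}_t$ by $B_{(t,z)}=\tup{Ker}(\psi_t)$ and $\Gamma_{(t,z)}=\tup{Ker}(\phi_z)$, so that the values $\omega_\chi(D_{(t,z)})$ are exactly the ones computed in the proof of Theorem~\ref{DS1}.

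With these choices, formulas \eqref{eq_lin} and \eqref{eq_nonli} yield a case-by-case verification. For a linear character $\chi_1^v$, the product equals $(-2^{m-1}\psi_v(a_t\sqrt{z_1}))(-2^{m-1}\psi_v(a_t\sqrt{z_2}))=2^{2(m-1)}\psi_v(a_t(\sqrt{z_1}+\sqrt{z_2}))$, which simplifies to $2^{2(m-1)}\psi_v(a_t\sqrt{z_1+z_2})$ because squaring is an $\bF_2$-linear automorphism of $\bF_{2^m}$; by \eqref{eq_lin} this matches $-2^{m-1}\omega_{\chi_1^v}(D_{(z_1,z_2)})$ for any choice of $B,\Gamma$ in $D_{(z_1,z_2)}$. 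For a nonlinear character $\chi_{2^{(m-e)/2}}^{(t,w,\epsilon)}$ (i.e.\ $v=t$), both factors equal $\psi_t(\tup{Ker}\psi_t)=2^{m-1}$, so the identity forces $B=\bF_{2^m}\setminus\tup{Ker}(\psi_t)$ in $D_{(z_1,z_2)}$. For $v\ne t$, expand the product via \eqref{eq_nonli}, apply $\overline{(\epsilon i)^{\kappa(\delta)}}=(-\epsilon i)^{\kappa(\delta)}$, use the multiplicativity of $\phi_w$ together with the additive relations $\delta_{z_1+z_2}=\delta_{z_1}+\delta_{z_2}$ and $u_{1,z_1+z_2}=u_{1,z_1}+u_{1,z_2}$, and invoke the quadratic identity $Q(u_1)+Q(u_2)+Q(u_1+u_2)=B_Q(u_1,u_2)$ for the symmetric bilinear form $B_Q$ associated to $Q$. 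Everything collapses to
\[
\sigma(z_1,z_2)\cdot 2^{2(m-1)}(\epsilon i)^{\kappa(\delta_{z_1+z_2})}(-1)^{Q(u_{1,z_1+z_2})+\tup{Tr}_e(\delta_{z_1+z_2}u_0 u_{1,z_1+z_2})}\phi_w(u_{1,z_1+z_2})
\]
for some sign $\sigma(z_1,z_2)\in\{\pm 1\}$, and declaring $\Gamma$ in $D_{(z_1,z_2)}$ to be $\tup{Ker}(\phi_{z_1+z_2})$ or its complement according to $\sigma(z_1,z_2)$ completes the verification via \eqref{eq_nonli}.

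The principal obstacle is the last step: confirming that the residual sign $\sigma(z_1,z_2)$ is genuinely independent of $(v,w,\epsilon)$, so that a single choice of $D_{(z_1,z_2)}$ works uniformly across all nonlinear characters. This rests on three cancellations: (i) the $\epsilon$-dependence disappears through the identity $(\epsilon i)^{\kappa(\delta_{z_1})+\kappa(\delta_{z_2})}=(-1)^{\kappa(\delta_{z_1})\kappa(\delta_{z_2})}(\epsilon i)^{\kappa(\delta_{z_1+z_2})}$, which replaces a putative $\epsilon^2$ by the sign $-1$ precisely when $\kappa(\delta_{z_1})=\kappa(\delta_{z_2})=1$; (ii) the $w$-dependence disappears through $\phi_w(u_{1,z_1})\phi_w(u_{1,z_2})=\phi_w(u_{1,z_1+z_2})$; and (iii) the residual term $\tup{Tr}_e(\delta_{z_1}u_0 u_{1,z_2})+\tup{Tr}_e(\delta_{z_2}u_0 u_{1,z_1})$ produced by linearizing the trace exponents is a function of $z_1,z_2$ alone. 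The bookkeeping of these three contributions, together with $B_Q(u_{1,z_1},u_{1,z_2})$, determines $\sigma(z_1,z_2)$ and will occupy the bulk of the actual proof.
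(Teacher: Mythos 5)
Your proposal takes essentially the same route as the paper's proof: reduce via Theorem \ref{LSequva}, take the linking difference set to be the Theorem \ref{DS1} member with parameter $z_1+z_2$ whose complement-flags are chosen to absorb the residual sign, and verify the character identity through \eqref{eq_lin} and \eqref{eq_nonli} using exactly the identities you list (conjugation of $(\epsilon i)^{\kappa(\delta)}$, the relation $\kappa(\delta+\delta')=\kappa(\delta)+\kappa(\delta')-2\kappa(\delta\delta')$, and $Q(u_1+u_1')=Q(u_1)+Q(u_1')+\tup{Tr}_e(u_1u_1')$). The sign $\sigma(z_1,z_2)$ whose bookkeeping you defer is written out explicitly in the paper as $-\varepsilon_{z_1}\varepsilon_{z_2}(-1)^{\delta'+\delta\delta'+\tup{Tr}_e(u_1u_1'+\delta' u_0u_1+\delta u_0u_1')}$, which indeed depends only on $z_1$ and $z_2$, exactly as you anticipate.
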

\begin{proof}
Note that $D_{(t,z)}$ is the central difference set given in Theorem \ref{DS1}.
From Theorem \ref{LSequva},  we shall show that for any pair $D_{(t,z)},D_{(t,z')}\in \mathcal{R}_t$  with $z\neq z'$, there always exists  $D\in\mathcal{R}_t$  such that
 \begin{equation}\label{char of LS1}
    \omega_{\chi}(D_{(t,z)})\overline{\omega_{\chi}(D_{(t,z')})}=-2^{m-1}\omega_\chi({D})
  \end{equation}
  holds for any  $\chi\in \tup{Irr}(G)^*$.
Set
\[
\eta_z=
\begin{cases}
  1,&\tup{ if } B_{(t,z)}=\tup{Ker}(\psi_t),\\
-1,&\tup{ if } B_{(t,z)}=\bF_{2^m}\setminus\tup{Ker}(\psi_t),
\end{cases}\,\]
and\[
\epsilon_z=
\begin{cases}
1,&\tup{ if } \Gamma_{(t,z)}=\tup{Ker}(\phi_z),\\
-1,&\tup{ if } \Gamma_{(t,z)}=\bF_{2^e}\setminus\tup{Ker}(\phi_z).
\end{cases}
\]
Suppose $\sqrt{z}=\delta u_0+u_1, \sqrt{z'}=\delta' u_0+u_1'$, where $\delta,\delta'\in\bF_2$, $\tup{Tr}_e(u_1)=\tup{Tr}_e(u_1')=0$.
Then we have
\[
\sqrt{z+z'}=\sqrt{z}+\sqrt{z'}=(\delta +\delta')u_0+(u_1+u_1').
\]
Define  \[D_{(t,z+z')}=\cup_{b\in B_{(t,z+z')}}C_b\cup_{a\in \bF_{2^m}^*\setminus\{a_t\sqrt{z+z'}\},x\in \Gamma_{(t,z+z')}}C_{a,x},\]
where \[ B_{(t,z+z')}=\begin{cases}
\tup{Ker}(\psi_t),&\tup{ if } -\eta_z\eta_{z'}=1,\\
\bF_{2^m}\setminus \tup{Ker}(\psi_t),&\tup{ if }  -\eta_z\eta_{z'}=-1,
 \end{cases}\]
and \[\Gamma_{(t,z+z')}=
 \begin{cases}
\tup{Ker}(\phi_{ z+z'}),&\tup{ if } -\varepsilon_z\varepsilon_{z'}(-1)^{\delta'+\delta\delta'+\tup{Tr}_e(u_1u_1'+\delta' u_0u_1+\delta u_0u_1')}=1,\\
\bF_{2^e}\setminus\tup{Ker}(\phi_{ z+z'}), &\tup{ if } -\varepsilon_z\varepsilon_{z'}(-1)^{\delta'+\delta\delta'+\tup{Tr}_e(u_1u_1'+\delta' u_0u_1+\delta u_0u_1')}=-1.
 \end{cases}\]
 By the definition of $D_{(t,z+z')}$, we have
\[
\eta_{z+z'}=-\eta_z\eta_{z'},\,\varepsilon_{z+z'}=-\varepsilon_z\varepsilon_{z'}(-1)^{\delta'+\delta\delta'+\tup{Tr}_e(u_1u_1'+\delta' u_0u_1+\delta u_0u_1')}.
\]

Now we prove that $D=D_{(t,z+z')}$.
    For each $\chi_1^v\in \tup{Lin}(G)^*$,we obtain from \eqref{eq_lin} that
    \begin{align*}
      \omega_{\chi_1^v}(D_{(t,z)})\overline{\omega_{\chi_1^v}(D_{(t,z')})} &= (-2^{m-1}\psi_v(a_t\sqrt{z}))(-2^{m-1}\psi_v(a_t\sqrt{z'})) \\
       &=-2^{m-1}(-2^{m-1})\psi_v{(a_t\sqrt{z+z'})}\\
       &=-2^{m-1}\omega_{\chi_1^v}(D_{(t,z+z')}).
    \end{align*}
For each $\chi_{2^{\frac{m-e}{2}}}^{(v,w,\epsilon)}\in\tup{Irr}_{(2^{\frac{m-e}{2}})}(G)$, we consider $v=t$ or not separately.
If $v=t$, we obtain from \eqref{eq_nonli} that
     \[
       \omega_{\chi_{2^{\frac{m-e}{2}}}^{(t,w,\epsilon)}}(D_{(t,z)})\overline{\omega_{\chi_{2^{\frac{m-e}{2}}}^{(t,w,\epsilon)}}(D_{(t,z')})}=
      \psi_v(B_{(t,z)})\psi_v(B_{(t,z')})=\eta_z\eta_{z'}2^{2(m-1)}=-2^{m-1}\omega_{\chi_{2^{\frac{m-e}{2}}}^{(t,w,\epsilon)}}(D_{(t,z+z')}).
  \]
     The last equality is deduced from the observation that $\psi_t(B_{t,z+z'})=-\eta_z\eta_{z'}2^{m-1}$.
Note that the complex conjugate of $i^{\kappa(\delta')}$ is equal to $(-1)^{\delta'}i^{\kappa(\delta')}$, and $\kappa(\delta+\delta')=\kappa(\delta)+\kappa(\delta')-2\kappa(\delta\delta')$.
Besides, the definition of $Q(x)$ listed in \eqref{eq_Qx} implied that
\[
Q(u_1+u_1')=Q(u_1)+Q(U_1')+\tup{Tr}_e(u_1u_1').
\]
So, if $v\neq t$, we have
     \begin{align*}
      \omega_{\chi_{2^{\frac{m-e}{2}}}^{(v,w,\epsilon)}}(D_{(t,z)})\overline{\omega_{\chi_{2^{\frac{m-e}{2}}}^{(v,w,\epsilon)}}(D_{(t,z')})} &=
     (-1)^{\delta'}\varepsilon_z\varepsilon_{z'}(\epsilon i)^{\kappa(\delta)+\kappa(\delta')}(-1)^{Q(u_1)+Q(u_1')+\tup{Tr}_e(\delta u_0u_1)+\tup{Tr}_n(\delta' u_0u_1')}\phi_w(u_1+u_1')2^{2(m-1)}\\
      &=-2^{m-1}\varepsilon_{z+z'}(\epsilon i)^{\kappa(\delta+\delta')}(-1)^{Q(u_1+u_1')+\tup{Tr}_e((\delta+\delta') u_0(u_1+u_1'))}\phi_w(u_1+u_1') 2^{m-1}\\
&=-2^{m-1}\omega_{\chi_{2^{\frac{m-e}{2}}}^{(v,w,\epsilon)}}(D_{(t,z+z')}).
     \end{align*}
Thus we complete the proof.
  \end{proof}

\begin{thm}\label{link2}
 For each $z\in\bF_{2^e}^*$, let $D_{z}=\cup_{a\in\bF_{2^m}^*, x\in \Gamma_z}C_{a,x}$, where
 $\Gamma_z=\tup{Ker}(\phi_z)$ or $\bF_{2^e}\setminus\tup{Ker}(\phi_z)$.
 Let $\mathcal{R}=\{ D_z: z\in \bF_{2^e}^*\}$. Then $\mathcal{R}$ is a
reduced linking system of difference sets in $G=A_2(m,\theta)$ with parameters\[(2^{2m},2^{m-1}(2^m-1),2^{m-1}(2^{m-1}-1),2^{2(m-1)}; 2^e-1).\]
\end{thm}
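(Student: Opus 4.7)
The plan is to mimic the argument of Theorem \ref{thm_lk1}. Since each $D_z$ is already a central difference set by Theorem \ref{DS2}, Theorem \ref{LSequva} reduces the task to producing, for every pair $z\neq z'$ in $\bF_{2^e}^*$, a central difference set $D_{(z,z')}$ in $G$ satisfying
\[
\omega_{\chi}(D_z)\overline{\omega_{\chi}(D_{z'})} = -2^{m-1}\,\omega_{\chi}(D_{(z,z')})
\]
for every $\chi\in\tup{Irr}(G)^*$. The natural candidate is $D_{(z,z')}:=D_{z+z'}$. Note that $z+z'\neq 0$ (we are in characteristic two and $z\neq z'$), and Theorem \ref{DS2} guarantees that $D_{z+z'}$ is a central difference set for \emph{either} choice of $\Gamma_{z+z'}$; this flexibility will be used to absorb a sign.

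Encode the choice of $\Gamma_z$ by $\varepsilon_z\in\{\pm1\}$ via $\phi_z(\Gamma_z)=\varepsilon_z 2^{e-1}$, and write $\sqrt z=\delta u_0+u_1$ with $\delta\in\bF_2$, $u_1\in U$ (similarly $\sqrt{z'}=\delta'u_0+u_1'$). Substituting $\phi_z(\Gamma_z)=\varepsilon_z 2^{e-1}$ into the computation of \eqref{nonlinD1}-\eqref{nonlinD} in the proof of Theorem \ref{DS2}, for every non-linear irreducible character $\chi_{2^{(m-e)/2}}^{(v,w,\epsilon)}$ one obtains the unified formula
\[
\omega_{\chi}(D_z)=\varepsilon_z\,2^{m-1}\,(\epsilon i)^{\kappa(\delta)}\,(-1)^{Q(u_1)+\tup{Tr}_e(\delta u_0 u_1)}\,\phi_w(u_1),
\]
which is independent of the character parameter $v$. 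For linear characters, \eqref{linD1} gives $\omega_{\chi_1^v}(D_z)=-2^{m-1}$ independently of $z$, so the required identity reduces to the tautology $(-2^{m-1})(-2^{m-1})=-2^{m-1}(-2^{m-1})$.

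All that remains is the non-linear case. Using the identity $\sqrt{z+z'}=\sqrt z+\sqrt{z'}=(\delta+\delta')u_0+(u_1+u_1')$, available because the Frobenius is additive, together with $\overline{(\epsilon i)^{\kappa(\delta')}}=(-1)^{\kappa(\delta')}(\epsilon i)^{\kappa(\delta')}$, $\kappa(\delta+\delta')=\kappa(\delta)+\kappa(\delta')-2\kappa(\delta\delta')$, $(\epsilon i)^2=-1$, and the quadratic identity $Q(u_1+u_1')=Q(u_1)+Q(u_1')+\tup{Tr}_e(u_1 u_1')$ already employed in Theorem \ref{thm_lk1}, a direct computation of $\omega_\chi(D_z)\overline{\omega_\chi(D_{z'})}$ shows that the factors $(\epsilon i)^{\kappa(\delta+\delta')}(-1)^{Q(u_1+u_1')+\tup{Tr}_e((\delta+\delta')u_0(u_1+u_1'))}\phi_w(u_1+u_1')$ common to both sides cancel, collapsing the required identity to the single scalar equation
\[
\varepsilon_z\varepsilon_{z'}(-1)^{\delta'+\delta\delta'+\tup{Tr}_e(u_1 u_1'+\delta u_0 u_1'+\delta' u_0 u_1)}=-\varepsilon_{z+z'}.
\]
Choosing $\Gamma_{z+z'}$ so that $\varepsilon_{z+z'}$ equals the right-hand side closes the argument. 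The main obstacle is sign bookkeeping from $\kappa$, $Q$ and the $\tup{Tr}_e$ cross-terms; but since the linear-character contribution (the $\cup_{b\in B_t}C_b$ piece in Theorem \ref{thm_lk1}) is now absent, the computation is actually shorter than the one already carried out there.
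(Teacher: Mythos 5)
Your proposal is correct and follows essentially the same route as the paper: both reduce via Theorem \ref{LSequva} to the character identity, take $D_{z+z'}$ as the linking difference set, and use the additivity of $\sqrt{\cdot}$ together with the $\kappa$- and $Q$-identities to collapse everything to a single sign condition that is absorbed into the choice of $\Gamma_{z+z'}$. The paper's own proof is likewise a sketch deferring to Theorem \ref{thm_lk1}, and your sign equation for $\varepsilon_{z+z'}$ matches the one given there.
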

\begin{proof}
 Note that $D_z$ is the central difference set given in Theorem \ref{DS2}.
The proof is similar to that of Theorem \ref{thm_lk1}, so we only give a sketch here.
 For any $z,z'\in\bF_{2^e}^*,z\neq z'$, set \[D_{z+z'}=\cup_{a\in\bF_{2^m}^*, x\in \Gamma_{z+z'}}C_{a,x}, \] where
 \[\Gamma_{z+z'}=
 \begin{cases}
 \tup{Ker}(\phi_{z+z'})&\tup{ if } -\varepsilon_z\varepsilon_{z'}(-1)^{\delta'+\delta\delta'+\tup{Tr}_e(u_1u_1'+\delta' u_0u_1+\delta u_0u_1')}=1,\\
 \bF_{2^e}\setminus\tup{Ker}(\phi_{z+z'})&\tup{ if } -\varepsilon_z\varepsilon_{z'}(-1)^{\delta'+\delta\delta'+\tup{Tr}_e(u_1u_1'+\delta' u_0u_1+\delta u_0u_1')}=-1.
 \end{cases}
 \]
 We show that for the pair  $D_{z},D_{z'}\in \mathcal{R}$, we have \[\omega_{\chi}(D_z)\overline{\omega_{\chi}(D_{z'})}=-2^{m-1}\omega_\chi({D_{z+z'}}).\]
    For any $\chi_1^v\in \tup{Lin}(G)^*$, we obtain from  \eqref{linD1} that
    \begin{equation*}
      \omega_{\chi_1^v}(D_{z})\overline{\omega_{\chi_1^v}(D_{z'})}=-2^{m-1} \omega_{\chi_1^v}(D_{z+z'}).
    \end{equation*}
  Write $\sqrt{z}=z^{2^{e-1}}$. Suppose $\sqrt{z}=\delta u_0+u_1$ and $\sqrt{z'}=\delta' u_0+u_1'$. Set
\[\epsilon_z=
\begin{cases}
1,&\tup{ if } \Gamma_z=\tup{Ker}(\phi_z),\\
-1,&\tup{ if } \Gamma_z=\bF_{2^e}\setminus\tup{Ker}(\phi_z).
\end{cases} \]
Then we have $\varepsilon_{z+z'}=-\varepsilon_z\varepsilon_{z'}(-1)^{\delta'+\delta\delta'+\tup{Tr}_e(u_1u_1'+\delta' u_0u_1+\delta u_0u_1')}$.
    For  $\chi_{2^{\frac{m-e}{2}}}^{(v,w,\epsilon)}\in \tup{Irr}_{(2^{\frac{m-e}{2}})}(G)$, we obtain from  \eqref{nonlinD} that
    \begin{align*}
      \omega_{\chi_{2^{\frac{m-e}{2}}}^{(v,w,\epsilon)}}(D_{z})\overline{\omega_{\chi_{2^{\frac{m-e}{2}}}^{(v,w,\epsilon)}}(D_{z'})}
     &=(-1)^{\delta'}\varepsilon_z\varepsilon_{z'}2^{2(m-1)}(\epsilon i)^{\kappa(\delta)+\kappa(\delta')}(-1)^{Q(u_1)+Q(u_1')+\tup{Tr}_e(\delta u_0u_1)+\tup{Tr}_e(\delta' u_0u_1')}\phi_w(u_1+u_1')\\
   &=-2^{m-1}\varepsilon_{z+z'}2^{(m-1)}(\epsilon i)^{\kappa(\delta+\delta')}(-1)^{Q(u_1+u_1')+\tup{Tr}_e((\delta+\delta') u_0(u_1+u_1'))}\phi_w(u_1+u_1')\\
 &=-2^{m-1}\omega_{\chi_{2^{\frac{m-e}{2}}}^{(v,w,\epsilon)}}({D_{z+z'}}).
     \end{align*}

  \end{proof}
\begin{remark}
   From Theorem 5.4 and  Theorem 5.6 in \cite{linking system2}, we obtain that $G=A_2(m,\theta)$ contains a reduced linking system of difference sets of size $2^m-1$.
   In Theorem \ref{thm_lk1} and Theorem \ref{link2}, we give infinite families of linking systems of central difference sets in $G$ of size $2^e-1$ by using a completely different method.
\end{remark}

\subsection{Central partial difference sets in $G=A_p(m,\theta)$ with $p$ odd}\quad

\begin{thm}\label{PDS1}
Let $G=A_p(m,\theta)$. Suppose that $m$ and $f$ are both odd and $p\nmid f$. Take $T$ to be a set of coset representatives for $\bF_p^*$ in $\bF_{p^m}^*$ such that $T\subseteq\square_{p^m}$.
 For each $t\in T$, take $B_t=\bF_{p^m}\setminus\tup{Ker}(\psi_t)$, where $\psi_t$ is as in \eqref{psi_v}. Choose $z \in \square_{p^e}$ such that $\tup{Tr}_e(z)=0$. Take $x\in\bF_{p^e}$ such that $x^2=z$ and write $\sqrt{z}=x$.
  Set
  \begin{equation}\label{eq_T(a)}
   \Gamma(a)=
  \begin{cases}
   \varnothing, &\tup{ if } a\in a_t\sqrt{z}\bF_p,\\
 \tup{Ker}(\phi_z), & \tup{ if } a\in \bF_{p^m}\setminus a_t\sqrt{z}\bF_p,
  \end{cases}
  \end{equation}
where $\phi_z$ is as in \eqref{phi_w}.
  Suppose
 \[
D_{(t,z)}=\cup_{b\in B_t}C_b\cup_{a\in \bF_{p^m}^*,x\in \Gamma(a)}C_{a,x},
\]where $C_b$ and $C_{a,x}$ are the conjugacy classes defined in Lemma \ref{cc}.
  Then $D_{(t,z)}$ is a regular  Latin square type central partial difference  set in $G$ with parameters \[(p^{2m},p^{m-1}(p^m-1), p^{2(m-1)}+p^m-3p^{m-1},p^{2(m-1)}-p^{m-1}).\]
\end{thm}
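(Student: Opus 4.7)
The plan is to invoke Theorem~\ref{char theorem}(2). Since $\lambda-\mu=p^{m-1}(p-2)$ and $k-\mu=p^{2m-2}(p-1)$, the quadratic $X^{2}-p^{m-1}(p-2)X-p^{2m-2}(p-1)$ factors as $(X-p^{m-1}(p-1))(X+p^{m-1})$, so the proof reduces to (a) verifying regularity of $D_{(t,z)}$, and (b) showing $\omega_{\chi}(D_{(t,z)})\in\{-p^{m-1},\,p^{m-1}(p-1)\}$ (in particular real) for every $\chi\in\tup{Irr}(G)^{*}$.

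For (a), a direct multiplication in $G$ gives $C_{a,x}^{(-1)}=C_{-a,\,1-x}$. Since $-1\in\bF_{p}$ the set $a_{t}\sqrt{z}\bF_{p}$ is negation-invariant, so $\Gamma(-a)=\Gamma(a)$; and the hypothesis $\tup{Tr}_{e}(z)=0$ is exactly what is needed so that $1\in\tup{Ker}(\phi_{z})$ and hence $1-\tup{Ker}(\phi_{z})=\tup{Ker}(\phi_{z})$. Together with $-B_{t}=B_{t}$ this yields $D_{(t,z)}^{(-1)}=D_{(t,z)}$; and $(0,0)\notin D_{(t,z)}$ because $0\in\tup{Ker}(\psi_{t})$. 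A short count confirms $|D_{(t,z)}|=p^{m-1}(p-1)+p^{m-1}(p^{m}-p)=p^{m-1}(p^{m}-1)$.

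For (b), I split by character type. For $\chi_{1}^{v}$ with $v\in\bF_{p^{m}}^{*}$, Table~\ref{tableA(p,odd)} together with orthogonality of $\psi_{v}$ on $\bF_{p^{m}}^{*}$ gives
\[
\omega_{\chi_{1}^{v}}(D_{(t,z)})\;=\;p^{m-1}(p-2)-p^{m-1}\sum_{r\in\bF_{p}^{*}}\xi_{p}^{r\tup{Tr}_{m}(va_{t}\sqrt{z})},
\]
which evaluates to $-p^{m-1}$ when $\tup{Tr}_{m}(va_{t}\sqrt{z})=0$ and to $p^{m-1}(p-1)$ otherwise. The main work lies with the non-linear characters $\chi_{p^{(m-e)/2}}^{(v,w,s)}$: Table~\ref{tableA(p,odd)} expresses
\[
\omega_{\chi}(D_{(t,z)})\;=\;\psi_{sv}(B_{t})+p^{m-e}\sum_{u\in\bF_{p^{e}}^{*}}\xi_{p}^{-\frac{1}{2}sf\tup{Tr}_{e}(x_{v}u^{2})}\phi_{w}(u)\,\phi_{sfx_{v}u^{2}}(\Gamma(a_{v}u)).
\]
The hypothesis $T\subseteq\square_{p^{m}}$ forces $x_{v}=1$ for every $v\in T$. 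Lemma~\ref{orthogonal} makes $\phi_{sfu^{2}}(\tup{Ker}(\phi_{z}))$ nonzero exactly when $sfu^{2}\in\bF_{p}^{*}z$, and since $e$ is odd we have $\bF_{p}^{*}\cap\square_{p^{e}}=\square_{p}$, so this pins $u$ to the single coset $U:=\sqrt{z}\bF_{p}^{*}$. On $U$ the quadratic phase trivialises because $\tup{Tr}_{e}(u^{2})\in\bF_{p}\cdot\tup{Tr}_{e}(z)=\{0\}$. When $v=t$, every $u\in U$ satisfies $a_{t}u\in a_{t}\sqrt{z}\bF_{p}^{*}$, so $\Gamma(a_{t}u)=\varnothing$ and the sum vanishes, leaving $\omega_{\chi}(D_{(t,z)})=\psi_{st}(B_{t})=-p^{m-1}$. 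When $v\neq t$ in $T$, a short coset-representative calculation (using $a_{w}^{p^{l}+1}=1/w$ for $w\in T$ and $r^{p^{l}+1}=r^{2}$ for $r\in\bF_{p}^{*}$) rules out $a_{v}\in\bF_{p}^{*}a_{t}$, so the excluded $\bF_{p}^{*}$-coset $a_{v}^{-1}a_{t}\sqrt{z}\bF_{p}^{*}$ is disjoint from $U$ and $\Gamma(a_{v}u)=\tup{Ker}(\phi_{z})$ for all $u\in U$; the sum collapses to $p^{m-1}\sum_{r\in\bF_{p}^{*}}\xi_{p}^{r\tup{Tr}_{e}(w\sqrt{z})}$, which is $p^{m-1}(p-1)$ or $-p^{m-1}$.

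The principal obstacle is this last bookkeeping in the non-linear character case: matching the orthogonality relation inside $\bF_{p^{e}}$ with the exclusion set $a_{t}\sqrt{z}\bF_{p}^{*}$ sitting in $\bF_{p^{m}}^{*}$. The hypotheses of the theorem are tuned precisely so each potential obstruction collapses cleanly: taking $z$ a square places $\sqrt{z}\in\bF_{p^{e}}$ so that $U$ is a single $\bF_{p}^{*}$-coset; taking $\tup{Tr}_{e}(z)=0$ simultaneously kills the quadratic phase on $U$ and supplies the regularity identity $1-\tup{Ker}(\phi_{z})=\tup{Ker}(\phi_{z})$; odd parity of $e$ forces $\bF_{p}^{*}\cap\square_{p^{e}}=\square_{p}$; and $T\subseteq\square_{p^{m}}$ eliminates the $v\in\blacksquare_{p^{m}}$ subcase entirely.
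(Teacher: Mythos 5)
Your proposal is correct and follows essentially the same route as the paper's proof: reduce via Theorem \ref{char theorem}(2) to showing $\omega_{\chi}(D_{(t,z)})\in\{-p^{m-1},p^{m-1}(p-1)\}$, evaluate the linear characters through $\psi_v(a_t\sqrt{z}\bF_p)$, restrict the non-linear character sum to the coset $\sqrt{z}\bF_p^*$ where the quadratic phase dies by $\tup{Tr}_e(z)=0$, and use the same $a_v\notin\bF_p^*a_t$ argument to separate the cases $v=t$ and $v\neq t$. The only differences are cosmetic: you additionally verify regularity and the exclusion $1_G\notin D_{(t,z)}$ directly (the paper leaves this folded into the character-theoretic equivalence), and you justify the restriction to a single $\bF_p^*$-coset via the oddness of $e$, which the paper states without elaboration.
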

\begin{proof}
 Note that
 \[
 |D_{(t,z)}|=p^m-p^{m-1}+p^{m-e}(p^m-p)\cdot p^{e-1}=p^{m-1}(p^m-1).
  \]
From Theorem \ref{char theorem} $(2)$,  we only need to show that \[\omega_{\chi}(D_{(t,z)})\in\{-p^{m-1}, p^{m-1}(p-1)\}\] for all $\chi\in\tup{Irr}(G)^*$.
 Firstly, for each  $\chi_1^v\in\tup{Lin}(G)^*$ listed in Table \ref{tableA(p,odd)}, we have
 \begin{align*}
  \omega_{\chi_1^v}(D_{(t,z)})=&|B_t|+p^{m-e}\sum_{a\in\bF_{p^m}\setminus a_t\sqrt{z}\bF_p}|\Gamma(a)|\psi_v(a)\\
   = &p^m-p^{m-1}-p^{m-1}\psi_v(a_t\sqrt{z}\bF_p).
 \end{align*}
 Noting that
 \[
 \psi_v(a_t\sqrt{z}\bF_p)=
 \begin{cases}
   p,& \tup{ if } a_t\sqrt{z}\in\tup{Ker}(\psi_v),\\
   0,& \tup{ otherwise},
 \end{cases}
 \]
 we get
 \[
 \omega_{\chi_1^v}(D_{(t,z)})=
 \begin{cases}
-p^{m-1},& \tup{ if } a_t\sqrt{z}\in\tup{Ker}(\psi_v),\\
p^{m-1}(p-1), &\tup{ otherwise}.
 \end{cases}
 \]

 Next consider the non-linear irreducible characters of $G$.
Since $T\subseteq\square_{p^m}$, from \eqref{eq_xv} we  have  $x_v=1$ for all $v\in T$.
For each  $\chi_{p^{\frac{m-e}{2}}}^{(v,w,s)}, v\in T, w\in\bF_{p^e}, s\in\{1,2\cdots,p-1\}$, listed in  Table \ref{tableA(p,odd)}, we have
 \begin{align}\label{eq_nolp}
  \omega_{\chi_{p^{\frac{m-e}{2}}}^{(v,w,s)}}(D_{(t,z)})=
  &\psi_{sv}(B_t)+p^{m-e}\sum_{u\in\bF_{p^e}}\xi_p^{-\frac{1}{2}sf\tup{Tr}_e(u^2)}\phi_w(u)\phi_{sf u^2}(\Gamma(a_vu))\notag\\
   = &\psi_{sv}(B_t)+p^{m-e}\sum_{u\in \sqrt{z}\bF_{p}}\xi_p^{-\frac{1}{2}sf\tup{Tr}_e(u^2)}\phi_w(u)\phi_{sf  u^2}(\Gamma(a_vu))\notag\\
    = &\psi_{sv}(B_t)+p^{m-e}\sum_{u\in \sqrt{z}\bF_{p}}\phi_w(u)\phi_{sf  u^2}(\Gamma(a_vu)).
 \end{align}
 The second equality is deduced from the fact that $\phi_{sf u^2}(\Gamma(a_vu))=0$ if $u\notin\sqrt{z}\bF_p$,
and the last equality  is deduced from $\tup{Tr}_e(z)=0$.
When $v=t$, we have
 \[
\psi_{st}(B_t)=-p^{m-1},\,\sum_{u\in \sqrt{z}\bF_{p}}\phi_w(u)\phi_{sfu^2}(\Gamma(a_vu))=0.
\]
So it deduces that
$\omega_{\chi_{p^{\frac{m-e}{2}}}^{(t,w,s)}}(D_{(t,z)})=-p^{m-1}$.
When  $v\neq t$, we have $\psi_{sv}(B_t)=0$.
We claim that \[a_v\sqrt{z}\notin a_t\sqrt{z}\bF_p.\]
If not, suppose $a_v=sa_t$ for some $s\in\bF_p^*$. Then $a_v\theta(a_v)=s^2a_t\theta(a_t)$. Since $T \subseteq\square_{\bF_{p^m}}$, we have $a_v\theta(a_v)=v^{-1}$ and $a_t\theta(a_t)=t^{-1}$. Thus it follows that \[tv^{-1}=s^2\in\bF_p^*, \tup{ i.e.},\, t\in\bF_p^*v.\] Since $T$ is a set of coset representatives for $\bF_p^*$ in $\bF_{p^m}^*$, we get a  contradiction.
The claim above implies that \[\Gamma(a_vu)=\tup{Ker}(\phi_z)\tup{ for each }u\in \sqrt{z}\bF_{p}^*.\]
So we have when $v\neq t$,
\begin{align*}
          \eqref{eq_nolp}=& p^{m-e}\sum_{u\in \sqrt{z}\bF_{p}^*}\phi_w(u)\phi_{sf  u^2}(\tup{Ker}(\phi_z))
           = p^{m-1}\phi_w(\sqrt{z}\bF_{p}^*)  \\
           =&
      \begin{cases}
      (p-1)p^{m-1}, &\tup{ if } \sqrt{z}\in\tup{Ker}(\phi_w),\\
      -p^{m-1},&\tup{ if } \sqrt{z}\notin\tup{Ker}(\phi_w).
      \end{cases}
          \end{align*}
Thus for any non-linear irreducible character $\omega_{\chi_{p^{\frac{m-e}{2}}}^{(v,w,s)}},v\in T, w\in\bF_{p^e}, s\in\{1,2\cdots,p-1\}$, we have
 $\omega_{\chi_{p^{\frac{m-e}{2}}}^{(v,w,s)}}(D_{(t,z)})\in\{-p^{m-1}, p^{m-1}(p-1)\}$,
and we complete the proof.
\end{proof}
\begin{remark}
  The condition that $m$ is odd in Theorem \ref{PDS1} can not be removed.
 Suppose that $\gamma$ is a primitive element of $\bF_{p^m}^*$, then $\bF_p^*=\langle \gamma^{\frac{p^m-1}{p-1}} \rangle$. If $m$ is odd, then $\frac{p^m-1}{p-1}$ is odd. So,
 \[
\bF_p^*\cap\square_{p^m}\neq \varnothing,\,\tup{ and } \bF_p^*\cap\blacksquare_{p^m}\neq \varnothing.
 \]
Thus we can
take $T$ to be a set of coset representatives for $\bF_p^*$ in $\bF_{p^m}^*$ such that $T\subseteq\square_{\bF_{p^m}}$.
\end{remark}
\begin{thm}\label{PDS2}
Let $G=A_p(m,\theta)$. Suppose that $m$ and $f$ are both odd and $p\nmid f$.   Choose $z \in \square_{\bF_{p^e}}$ such that $\tup{Tr}_e(z)=0$. Take $\Gamma_z=\tup{Ker}(\phi_z)$, where $\phi_z$ is as in \eqref{phi_w}. Let
  \[D_{z}=\cup_{a\in\bF_{p^m}^*, x\in \Gamma_z}C_{a,x},\,D_z'=\cup_{b\in\bF_{p^m}^*}C_b\cup_{a\in\bF_{p^m}^*,x\in\Gamma_z}C_{a,x},\]
where $C_b$ and $C_{a,x}$ are the conjugacy classes defined in Lemma \ref{cc}.
  Then  $D_z$ is a Latin square type  central partial difference  set in $G$ with parameters \[(p^{2m},p^{m-1}(p^m-1), p^{2(m-1)}+p^m-3p^{m-1},p^{2(m-1)}-p^{m-1}),\]
and $D_z'$  is a Latin square type  central partial difference  set  with parameters \[(p^{2m},(p^{m-1}+1)(p^m-1),p^{2(m-1)}+p^m-p^{m-1}-2,p^{2(m-1)}+p^{m-1} ).\]
\end{thm}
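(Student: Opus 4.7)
The plan is to apply Theorem \ref{char theorem}(2): it suffices to show that for every $\chi \in \tup{Irr}(G)^*$, the value $\omega_\chi(D_z)$ is real and lies in $\{-p^{m-1}, p^{m-1}(p-1)\}$, while $\omega_\chi(D_z')$ lies in $\{-(p^{m-1}+1), p^{m-1}(p-1)-1\}$, since these are the eigenvalues forced by the respective Latin square parameter sets. The cardinality checks are immediate: $|D_z| = (p^m-1) \cdot p^{e-1} \cdot p^{m-e} = p^{m-1}(p^m-1)$ and $|D_z'| = (p^m-1) + |D_z| = (p^{m-1}+1)(p^m-1)$.

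The computation for $D_z$ follows the template of the proof of Theorem \ref{PDS1}. For a linear character $\chi_1^v$, orthogonality gives $\sum_{a \in \bF_{p^m}^*}\psi_v(a) = -1$, so $\omega_{\chi_1^v}(D_z) = p^{m-e}\cdot p^{e-1}\cdot(-1) = -p^{m-1}$. For a non-linear $\chi_{p^{(m-e)/2}}^{(v,w,s)}$, Lemma \ref{orthogonal} collapses $\phi_{sfx_vu^2}(\tup{Ker}(\phi_z))$ to $p^{e-1}$ when $sfx_vu^2 \in z\bF_p$ and to $0$ otherwise. Since $sf \in \bF_p^*$, the effective summation runs over $S_v := \{u \in \bF_{p^e}^*: x_v u^2 \in z\bF_p\}$.

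The main obstacle is that, unlike in Theorem \ref{PDS1}, the set $T$ here is not restricted to $\square_{p^m}$, so both $x_v = 1$ and $x_v = x_0$ must be handled. The key observation is that $e = m/f$ is odd (both $m$ and $f$ being odd), and then elementary field theory shows that an element of $\bF_p^*$ is a square in $\bF_{p^e}^*$ iff it is a square in $\bF_p^*$. Consequently, $S_v = \sqrt{z}\,\bF_p^*$ when $x_v = 1$, while $S_v = \xi\,\bF_p^*$ when $x_v = x_0$, where $\xi \in \bF_{p^e}$ is any square root of $\alpha z/x_0$ for a fixed non-square $\alpha \in \bF_p^*$ (such $\xi$ exists because $\alpha z/x_0$ is then a square in $\bF_{p^e}$). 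Parameterising $u = c\sqrt{z}$ (resp.\ $u = c\xi$) with $c \in \bF_p^*$, the hypothesis $\tup{Tr}_e(z) = 0$ forces $\tup{Tr}_e(x_v u^2) = 0$, killing the phase factor $\xi_p^{-\frac{1}{2}sf \tup{Tr}_e(x_v u^2)}$. The remaining sum then collapses to $p^{e-1}\sum_{c \in \bF_p^*}\xi_p^{c \tup{Tr}_e(w\sqrt{z})}$ (or its $\xi$-analogue), which equals $p^{e-1}(p-1)$ or $-p^{e-1}$ depending on whether $\tup{Tr}_e(w\sqrt{z})$ vanishes or not. Multiplying by $p^{m-e}$ yields $\omega_\chi(D_z) \in \{p^{m-1}(p-1), -p^{m-1}\}$, as required.

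Finally, for $D_z'$ write $\omega_\chi(D_z') = \omega_\chi(D_z) + \sum_{b \in \bF_{p^m}^*}\omega_\chi(C_b)$. For a linear character $\omega_{\chi_1^v}(C_b) = 1$, contributing $p^m - 1$; for a non-linear character $\omega_\chi(C_b) = \psi_{sv}(b)$, contributing $-1$ via $\sum_{b \in \bF_{p^m}} \psi_{sv}(b) = 0$. Hence $\omega_\chi(D_z') \in \{p^{m-1}(p-1) - 1,\, -(p^{m-1}+1)\}$, matching exactly the eigenvalues $n - r$ and $-r$ for Latin square type parameters with $n = p^m$ and $r = p^{m-1}+1$.
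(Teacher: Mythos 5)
Your proposal is correct and follows essentially the same character-theoretic route as the paper: verify via Theorem \ref{char theorem}(2) that every $\omega_\chi$-value lands in $\{-p^{m-1},\,p^{m-1}(p-1)\}$ for $D_z$ (and shift by the contribution of $\cup_{b\in\bF_{p^m}^*}C_b$ for $D_z'$), using orthogonality for the linear characters and the collapse of $\phi_{sfx_vu^2}(\tup{Ker}(\phi_z))$ plus $\tup{Tr}_e(z)=0$ for the non-linear ones. The only divergence is that the paper sidesteps your ``main obstacle'' by simply choosing $T\subseteq\square_{p^m}$ (legitimate since $m$ is odd), forcing $x_v=1$ throughout, whereas you handle the $x_v=x_0$ case directly via the observation that squares of $\bF_p^*$ remain squares in $\bF_{p^e}$ when $e$ is odd — your version is slightly more work but equally valid and independent of the choice of $T$.
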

\begin{proof}
Since the proof that $D_z'$ is a partial difference set in $G$ is similar to the proof that $D_z$ is a partial difference set in $G$, we only give the details for $D_z$.  Noting that $m$ is odd, we can take $T$ to be a set of coset representatives for $\bF_p^*$ in $\bF_{p^m}^*$ such that $T\subseteq\square_{p^m}$.
 Note that \[|D_{z}|=p^{m-e}(p^m-1)p^{e-1}=p^{m-1}(p^m-1).\]
Similar to the proof of Theorem \ref{PDS1}, from Theorem \ref{char theorem}  we only need to show that  \[\omega_{\chi}(D_{(t,z)})\in\{-p^{m-1}, p^{m-1}(p-1)\} \tup{ for all }\chi\in\tup{Irr}(G)^*.\]
First we have \[\omega_{\chi_1^v}(D_z)=p^{m-1}\psi_v(\bF_{p^m}^*)=-p^{m-1}\tup{ for any }v\in\bF_{p^m}^*.\]
Take $\sqrt{z}$ to be an element in $\bF_{p^e}$ such that its square is $z$. Then we obtain that for any $v\in T, w\in\bF_{p^e},s\in\{1,\cdots,p-1\}$,
\[\omega_{\chi_{p^{\frac{m-n}{2}}}^{(v,w,s)}}(D_{z})=p^{m-e}\sum_{u\in\sqrt{z}\bF_{p}^*}\phi_w(u)p^{e-1} =
\begin{cases}
  (p-1)p^{m-1}, &\tup{ if } \sqrt{z}\in\tup{Ker}(\phi_w),\\
-p^{m-1},&\tup{ if } \sqrt{z}\notin\tup{Ker}(\phi_w).
\end{cases}\]
Hence $D_z$ is a partial difference set in $G$.
\begin{remark}
Take conditions in Theorem \ref{PDS2}.
Let \[D_z''=\cup_{a\in\bF_{p^m}^*,x\in\Gamma'(z)}C_{a,x},\] where $\Gamma'(z)=\bF_{p^m}\setminus\tup{Ker}(\phi_z)$. Then $D_z''$ is also a partial difference set in $G$, and the proof is similar to that of $D_z$.
 Besides, $D_z'$ given in Theorem \ref{PDS2} can be obtained by taking complement of $D_z''$ and subtracting $1_G$.
\end{remark}
\end{proof}

\section{Concluding remark}
 In the case $G=A_2(m,\theta)$, we show that there are no non-trivial central difference sets in $G$ when $o(\theta)$ is even. However, when $o(\theta)>1$ is odd, we give two distinct constructions of central difference sets. Particularly, when $o(\theta)=m$, those two constructions give all non-trivial central difference sets in $G$. Besides, we  construct some linking systems of central difference sets in $G$ by using the difference sets we constructed.
 In the case $G=A_p(m,\theta)$ with $p>2$,  when $o(\theta)$ is even, we show that there are no non-trivial Latin square type central partial difference set in $G$ with $r=p^{m-1}$ or $p^{m-1}+1$.  When $o(\theta)>1$ is odd, we extend our two  constructions of central difference sets in $A_2(m,\theta)$ to  $G$ and obtain two infinite families of Latin square type central partial difference sets in $G$.

 \section*{Acknowledgement}
We are grateful to Prof. Tao Feng for many useful suggestions. The work of the two authors was supported by National Natural Science Foundation of China under Grant No. 11771392.

\end{document}